\renewcommand{\geq}{\geqslant}
\renewcommand{\leq}{\leqslant}
\newcommand{\ptl}{\partial}
\newcommand{\Sg}{\Sigma} 
\newcommand{\sg}{\sigma}
\newcommand{\Om}{\Omega}
\newcommand{\eps}{\varepsilon}
\newcommand{\de}{\mathcal{D}}
\newcommand{\var}{\varphi}
\newcommand{\subeq}{\subseteq}
\newcommand{\sub}{\subset}
\newcommand{\rr}{\mathbb{R}}
\newcommand{\rrn}{\mathbb{R}^{n+1}}
\newcommand{\sph}{\mathbb{S}}
\newcommand{\escpr}[1]{\left< #1\right>}
\newcommand{\cone}{{\times\!\!\!\!\times}}
\DeclareMathOperator{\divv}{div}
\newtheorem{theorem}{Theorem}[section]
\newtheorem{proposition}[theorem]{Proposition}
\newtheorem{lemma}[theorem]{Lemma}
\newtheorem{corollary}[theorem]{Corollary}
\theoremstyle{definition}
\newtheorem{remark}[theorem]{Remark}
\newtheorem{remarks}[theorem]{Remarks}
\newtheorem{example}[theorem]{Example}
\newtheorem{examples}[theorem]{Examples}
\numberwithin{equation}{section}
\begin{document}

\title[Stable CMC hypersurfaces for homogeneous densities in convex cones]{Compact stable hypersurfaces with free boundary in convex solid cones with homogeneous densities}

\author[A.~Ca\~nete]{Antonio Ca\~nete}
\address{Departamento de
Matem\'atica Aplicada I \\
Universidad de Sevilla \\ E-41012 Sevilla, Spain}
\email{antonioc@us.es}

\author[C.~Rosales]{C\'esar Rosales}
\address{Departamento de
Geometr\'{\i}a y Topolog\'{\i}a \\
Universidad de Granada \\ E-18071 Granada, Spain}
\email{crosales@ugr.es}

\thanks{Both authors are partially supported by MCyT
research project MTM2010-21206-C02-01, and Junta de Andaluc\'ia grants FQM-325 and P09-FQM-5088}
\keywords{Convex cones, homogeneous densities, generalized mean curvature, Minkowski formula, stable hypersurfaces}
\subjclass[2000]{53C42, 53A10} 
\date{\today}

\begin{abstract}
We consider a smooth Euclidean solid cone endowed with a smooth homogeneous density function used to weight Euclidean volume and hypersurface area. By assuming convexity of the cone and a curvature-dimension condition, we prove that the unique compact, orientable, second order minima of the weighted area under variations preserving the weighted volume and with free boundary in the boundary of the cone are intersections with the cone of round spheres centered at the vertex. 
\end{abstract}

\maketitle

\thispagestyle{empty}

\section{Introduction}
A \emph{stable hypersurface} in a Riemannian manifold with boundary is a second order minimum of the area relative to the interior of the manifold for compactly supported deformations preserving the separated volume. From the first variation formulae such a hypersurface has constant mean curvature and free boundary meeting orthogonally the boundary of the manifold. The second variation formula implies that the index form associated to the hypersurface is nonnegative for smooth mean zero functions with compact support. The stability condition has been intensively studied in the literature and plays a central role in relation to the \emph{isoperimetric problem}, where we seek sets of the least possible perimeter among those with fixed volume.

Barbosa and do Carmo in \cite{bdc}, and Barbosa, do Carmo and Eschenburg in \cite{bdce} used suitable deformations to show that a smooth, compact, orientable and stable hypersurface $\Sg$ immersed in a Riemannian space form is a geodesic sphere. In \cite{wente}, Wente observed that the variation employed in \cite{bdc} is geometrically obtained by parallel hypersurfaces dilated to keep the enclosed volume constant. The resulting variation strictly decreases the boundary area unless the hypersurface coincides with a round sphere. This technique was successfully used later by Morgan  and Ritor\'e \cite{morgan-rit} to prove that geodesic spheres about the vertex and boundaries of flat round balls are the unique compact stable hypersurfaces inside smooth spherical cones of non-negative Ricci curvature and empty boundary. In smooth, convex, Euclidean solid cones with non-empty boundary, Ritor\'e and the second author showed in \cite{cones} that the unique compact stable hypersurfaces are either spherical caps centered at the vertex or half-spheres lying in a flat portion of the boundary of the cone. Also by following the ideas in \cite{bdc} and \cite{wente}, the Wulff shapes were characterized as the unique compact stable hypersurfaces with constant anisotropic mean curvature in $\rrn$, see the papers by Palmer \cite{palmer} and Winklmann \cite{winklmann}. 

The study of variational problems related to the minimization of the area functional in \emph{metric measure spaces} has been focus of attention in the last years, with an increasing development of the theory of minimal and constant mean curvature surfaces in this setting. In this paper we obtain new classification results for compact stable hypersurfaces in the framework of \emph{manifolds with density}. These structures have been considered by many authors and are currently a topic of much interest. They have applications in several areas ranging from functional analysis and probability theory to Riemannian geometry. In fact, many important notions in Riemannian geometry have generalizations to manifolds with density, allowing the extension of some classical questions and results. For a nice introduction to manifolds with density we refer the reader to Chapter~8 of Morgan's book \cite{gmt} and to Chapter~3 of Bayle's thesis \cite{bayle-thesis}. 

Let us introduce the definition of a manifold with density and the corresponding notions of volume, area and Ricci curvature. By a \emph{manifold with density} we mean a connected manifold $M^{n+1}$ with a Riemannian metric $\escpr{\cdot\,,\cdot}$ and a smooth $(C^\infty)$ positive function $f=e^\psi$ used to weight the Hausdorff measures associated to the Riemannian distance. In particular, the \emph{weighted volume} of a Borel set $\Om\subeq M$ and the \emph{weighted area} of a piecewise smooth hypersurface $\Sg\sub M$ relative to an open subset $U\subeq M$ are given by
\begin{equation}
\label{eq:volarea}
V_f(\Om):=\int_\Om dv_f=\int_\Om f\,dv,\qquad 
A_f(\Sg,U):=\int_{\Sg\cap U} da_f=\int_{\Sg\cap U} f\,da,
\end{equation}
where $dv$ and $da$ are the Riemannian elements of volume and area, respectively.  We also denote $dl_f:=f\,dl$, where $dl$ is the $(n-1)$-dimensional Hausdorff measure in $M$. Manifolds with density are also known as \emph{smooth metric measure spaces}. In fact, if we consider the Riemannian distance in $M$ and the weighted volume defined above, then we obtain a metric measure space whose associated Minkowski content coincides with the weighted area of the boundary for Borel sets with Lipschitz boundary. For a manifold $M$ with density $f=e^\psi$ the \emph{Bakry-\'Emery-Ricci tensor}, or simply \emph{$f$-Ricci tensor}, is defined by
\begin{equation}
\label{eq:fricci}
\text{Ric}_f:=\text{Ric}-\nabla^2\psi,
\end{equation}
where $\text{Ric}$ and $\nabla^2$ denote the Ricci tensor and the Hessian operator for the Riemannian metric in $M$, respectively.  We also consider the \emph{$k$-dimensional Bakry-\'Emery-Ricci tensor} given by
\begin{equation}
\label{eq:fkricci}
\text{Ric}^k_f:=\text{Ric}_f-\frac{1}{k}\,(d\psi\otimes d\psi), \ \text{ for any } k\neq 0.
\end{equation} 
The tensor $\text{Ric}_f$ was first introduced by Lichnerowicz \cite{lich1}, \cite{lich2}, and later generalized in a form equivalent to $\text{Ric}^k_f$ by Bakry and \'Emery \cite{be} in the framework of diffusion generators. Note that $\text{Ric}_f^k$ carries information involving curvature and dimension from both, the Riemannian manifold $M$ and the density $f=e^\psi$. By this reason, a lower bound on $\text{Ric}^k_f$ is usually known as a \emph{curvature-dimension condition}. Such a condition allows the extension to manifolds with density of many classical comparison results in Riemannian geometry, see \cite{qian}, \cite{lott},  \cite{morgandensity}, \cite{morganmyers},  \cite{wwsurvey}, \cite{wei}, \cite{rimoldi} and references therein. Moreover, these Ricci tensors appear explicitly in the second derivative of the weighted area, see \eqref{eq:2nd} and \eqref{eq:index1}, and are useful to obtain extensions of the L\'evy-Gromov isoperimetric inequality, see \cite[Ch.~3]{bayle-thesis} and \cite{milman}. We also remark that the equation $\text{Ric}_f=\lambda\escpr{\cdot\,,\cdot}$ for some constant $\lambda$ is the gradient Ricci soliton equation, which plays an important role in the theory of the Ricci flow.

Once we have notions of volume and area, we can study minimization problems such as the \emph{isoperimetric problem} or the \emph{Plateau problem}, where we try to find hypersurfaces of the least possible area with a boundary or volume constraint. The complete solution to these questions inside an arbitrary manifold with density is a very difficult task. Therefore, it is natural to focus first on the most simple and symmetric Riemannian spaces -the space forms- endowed with some suitable densities. Here by a suitable density we mean one with a simple behaviour of the Ricci tensors (constant, bounded) or having good properties with respect to a certain subgroup of diffeomorphisms\,/\,isometries of the ambient manifold. Probably the best studied example is the Euclidean space $\rrn$ endowed with a radial density. This includes the Gaussian density $\exp(-|p|^2)$, the model density $\exp(|p|^2)$, and the homogeneous densities $|p|^k$ with $k\in\rr$. Though there are several works studying constant mean curvature, stable, and isoperimetric hypersurfaces for these densities, we do not aim here to give an exhaustive list of references. However, we would like to point out that the \emph{log-convex density conjecture}, posed by Bayle, Morgan and the authors in \cite{rcbm}, is a remarkable open question in this setting. It was shown in \cite{rcbm} that a radial density is log-convex if and only if round spheres centered at the origin are stable hypersurfaces. Hence, it is natural to ask if these spheres are also global minimizers of the area under a volume constraint. The existence of isoperimetric solutions for radial log-convex densities is a consequence of more general results for radial densities proved by Morgan and Pratelli \cite{morgan-pratelli}. Figalli and Maggi have recently shown in \cite{figalli} that the log-convex density conjecture holds for some interval of volumes $[0,m_0)$. In \cite{howe}, Howe states that round spheres about the origin are isoperimetric in $\rrn-\{0\}$ with any radial density such that the weighted area of such spheres is a convex function of the weighted bounded volume and satisfying some additional minor hypotheses. A complete account of all the related results until 2011 has been given by Morgan in \cite{lcdc}. 

In this paper we focus on densities defined on \emph{solid cones} of $\rrn$ and having a nice behaviour with respect to the family of dilations centered at the origin. The motivation for this comes from the existence of many important results in the theory of constant mean curvature and stable hypersurfaces in $\rrn$ whose proofs rely on the scaling property that some geometric quantities (volume, area and mean curvature) show under dilations.  Given a smooth solid cone $M\subeq\rrn$ and a smooth density $f=e^\psi$ defined on the punctured cone $M^*$, we consider the weighted volume $V_f$ and the weighted area $A_f$ in \eqref{eq:volarea} \emph{relative to the interior of the cone}. This means that the intersection $\Sg\cap\ptl M$ of a hypersurface with the boundary of the cone does not contribute to $A_f(\Sg)$. Then, we prove in Lemma~\ref{lem:vap} that $V_f$ and $A_f$ are homogeneous with respect to the one-parameter group of dilations $h_t(p):=tp$ if and only if $f$ is a homogeneous function. This leads us to consider $k$-\emph{homogeneous densities}: these are positive functions whose restriction to any open segment of the cone leaving from the origin is a monomial of degree $k$. A basic example is the Euclidean space $\rrn$ with constant density $f=1$.  As it is observed in Remark~\ref{re:singularity} a homogeneous density of degree $k\neq 0$ cannot be extended to the origin as a continuous and positive function. Hence the origin may be understood as a \emph{singularity} for such densities and must be treated carefully. In fact, for densities of degree $k\leq-n$ we see in Examples~\ref{ex:infinite} that any smooth, compact, embedded hypersurface containing the origin has infinite weighted area. So, in order to study minimizers of the area functional it is natural, in the case $k<0$, to restrict ourselves to hypersurfaces contained in the punctured cone.

In Section~\ref{sec:homogeneous} we provide several examples and properties of homogeneous densities, including two characterizations of the curvature-dimension condition $\text{Ric}_f^k\geq 0$. An important result is given in Proposition~\ref{prop:keyfacts}, where we show that the position vector field $X(p):=p$ has constant \emph{$f$-divergence}, and that the \emph{$f$-mean curvature} of hypersurfaces scales with respect to dilations as the Euclidean mean curvature. The \emph{$f$-divergence} of a smooth vector field is defined in \eqref{eq:divvol} and \eqref{eq:divsup}. It provides a generalization of the Riemannian divergence functional which allows extending to manifolds with density some classical divergence theorems and integration by parts formulae in Riemannian geometry, see Section~\ref{sec:divth}. The \emph{$f$-mean curvature} of a hypersurface is the function defined in \eqref{eq:fmc} and previously introduced by Gromov \cite{gromov-GAFA} in relation to the first derivative of the weighted area functional, see \eqref{eq:1st}.

The results of this paper are closely related to the isoperimetric problem in Euclidean solid cones with homogeneous densities. In the classical case of $\rrn$ with constant density $f=1$, it was proved by Lions and Pacella \cite{lp} that, inside a convex cone, round balls centered at the vertex minimize the relative perimeter among regions enclosing the same volume. In the same paper they obtain uniqueness of the isoperimetric regions for smooth convex cones. The uniqueness in the smooth case is also consequence of the classification of stable hypersurfaces given by Ritor\'e and the second author in \cite{cones}. Recently Figalli and Indrei \cite{figalli-indrei} have discussed uniqueness in arbitrary convex cones. 

The isoperimetric question has also been studied in $\rrn$ with the homogeneous radial density $|p|^k$. For $k<-(n+1)$, it was first shown by Carroll, Jacob, Quinn and Walters \cite{cjqw} for the planar case and later by D\'iaz, Harman, Howe and Thompson \cite{dhht} for arbitrary dimension, that round spheres centered at the origin are the unique isoperimetric hypersurfaces (bounding volume away from the origin). For $n=1$ and $k>0$ it was proved by Dahlberg, Dubbs, Newkirk and Tran \cite{dahlberg} that round circles passing through the origin are the unique solutions. The description of the isoperimetric curves in planar cones with density $|p|^k$, $k>0$, has been given in \cite{dhht}. 

In \cite{cabre2}, Cabr\'e and Ros-Oton establish that, in $\rrn$ with a monomial homogenous density $|x_1|^{\alpha_1}\cdots |x_{n+1}|^{\alpha_{n+1}}$, where $\alpha_i\geq 0$ for any $i=1,\ldots, n+1$, the intersections with the cone $\{(x_1,\ldots, x_{n+1})\in\rrn\,; x_i\geq 0\text{ for all } i \text{ such that }\alpha_i>0\}$ of round spheres centered at the origin are isoperimetric solutions. Very recently  Cabr\'e, Ros-Oton and Serra in \cite{cabre} and \cite{cabre3} have applied the ABP method to a linear Neumann problem with density to obtain the following nice result: the spherical caps centered at the vertex are isoperimetric solutions for an arbitrary convex cone endowed with a continuous, homogeneous, non-negative density function $f$ of degree $k>0$, such that $f$ is positive and locally Lipschitz in the interior of the cone, and $f^{1/k}$ is concave in the interior of the cone. This was proved in \cite{cabre} under the further assumptions that $f$ is $C^{1,\alpha}$ with $0<\alpha<1$ in the interior of the cone and $f$ vanishes along the boundary. The general case has been treated in \cite{cabre3}, together with an extension for anisotropic area functionals associated to homogeneous densities. In particular, this generalizes the classical result of Lions and Pacella \cite{lp}, the Wulff isoperimetric inequality proved by Taylor \cite{taylor}, and a previous result of Maderna and Salsa \cite{maderna} for the half-plane $y>0$ with density $y^k$, $k>0$. Also, in the introduction of \cite{cabre3} it is announced that the uniqueness of the isoperimetric solutions will be discussed in a future work with Cinti and Pratelli. In \cite{homoiso} the authors obtain existence results and properties of the isoperimetric profile under mild regularity conditions on the homogeneous density, and uniqueness of the isoperimetric solutions for smooth convex cones with $\text{Ric}_f^k\geq 0$ and $k>0$. It is worth mentioning that, as a consequence of Lemma~\ref{lem:matrimonio}, the curvature-dimension inequa\-lity $\text{Ric}_f^k\geq 0$ for $k>0$ is equivalent to the concavity of $f^{1/k}$, which is the key hypothesis assumed in the main results of \cite{cabre} and \cite{cabre3}. 

Our main goal in this paper is to prove classification results for compact stable hypersurfaces inside Euclidean solid cones with homogeneous densities. An \emph{$f$-stable hypersurface} in a Riemannian manifold $M$ with a smooth density $f=e^\psi$ satisfies that the second derivative of the weighted area functional is nonnegative under compactly supported variations preserving the enclosed volume. Hence an isoperimetric hypersurface is also an $f$-stable one. Recently, many authors have studied complete $f$-stable minimal surfaces inside $3$-manifolds with non-negative $f$-Ricci tensor or $f$-scalar curvature, see \cite{fan}, \cite{ho}, \cite{mejia}, \cite{espinar}, \cite{liu}, \cite{impera} and \cite{kathe}. The $f$-stability of hyperplanes for Euclidean product densities has been considered in \cite{chiara}, see also \cite{calibrations}. The variational properties of critical points of the weighted area for piecewise regular densities have been established in \cite[Sect.~2.2]{cvm}.

In Section~\ref{sec:variational} we gather some variational properties and examples of $f$-stable hypersurfaces \emph{with free boundary} in a Euclidean solid cone $M\subeq\rrn$ endowed with a $k$-homogenous density $f=e^\psi$. The variation formulae for hypersurfaces with non-empty boundary obtained in \cite{kathe} imply that $f$-stable hypersurfaces have constant $f$-mean curvature off of the vertex and meet $\ptl M$ orthogonally along the free boundary.  Moreover, the associated \emph{$f$-index form} $\mathcal{Q}_f$ defined in \eqref{eq:index2} satisfies $\mathcal{Q}_f(u,u)\geq 0$ for any smooth function $u$ supported away from the origin and having mean zero with respect to the weighted element of area. In Example~\ref{ex:spheres11} we show that the spherical caps centered at the vertex always provide critical points of the area under a volume constraint for arbitrary homogeneous densities. Hence they are natural candidates to be $f$-stable. In fact, we see in Example~\ref{ex:sphere2} that, for $k\leq -n$, the spherical caps centered at the vertex are always $f$-stable. This also happens when $k>0$ and $\text{Ric}^k_f\geq 0$ as a consequence of the isoperimetric results in \cite{cabre3} and \cite{homoiso}. Then, it is natural to ask under which conditions the spherical caps centered at the vertex are the unique $f$-stable hypersurfaces in a Euclidean solid cone with a homogeneous density. To find such a condition it is interesting to observe that the $f$-index form depends on the $f$-Ricci tensor $\text{Ric}_f$ and the second fundamental form $\text{II}$ of $\ptl M$ in such a way that the stability inequality $\mathcal{Q}_f(u,u)\geq 0$ is more restrictive provided $\text{Ric}_f\geq 0$ and $\text{II}\geq 0$. Hence convexity of the cone $M$ and nonnegativity of $\text{Ric}_f$ become natural hypotheses in order to obtain sharp classification results for $f$-stable hypersurfaces. In fact, the main result of the paper establishes the following:
\begin{quotation}
\emph{Let $\Sg$ be a smooth, compact, orientable hypersurface in a convex solid cone $M\subeq\rrn$ endowed with a $k$-homogeneous density of nonnegative Bakry-\'Emery-Ricci tensor $\emph{Ric}_f^k$. Suppose that, either $k<-n$ and $\Sg\sub M^*$, or $k>0$ and $\Sg\sub M$. If $\Sg$ is $f$-stable, then $\Sg$ is the intersection with the cone of a round sphere centered at the vertex.}
\end{quotation}

The proof of the previous theorem is contained in Section~\ref{sec:main} and it is divided into two steps. We first obtain in Theorem~\ref{th:main} that the statement holds for an $f$-stable hypersurface $\Sg$ contained in the punctured cone $M^*$. For this, we insert inside the stability inequality $\mathcal{Q}_f(u,u)\geq 0$ the test function given by
\[
u=n+k+H_f\escpr{X,N},
\]
where $H_f$ is the $f$-mean curvature of the hypersurface, $X(p)=p$ is the position vector field, and $N$ is a unit normal along $\Sg$. In $\rrn$ with constant density $f=1$ the function $u$ coincides, up to a constant, with the function used by Barbosa and do Carmo in \cite{bdc}. As an application of the divergence theorem in Lemma~\ref{lem:divthsup} we deduce that $u$ has mean zero with respect to the weighted element of area. In fact, this integral equality leads us in Proposition~\ref{prop:mink} to generalize to solid cones with homogeneous densities a classical Minkowski identity for compact hypersurfaces in $\rrn$ relating volume, area and mean curvature \cite{hsiung}. From a geometric point of view, we show in Lemma~\ref{lem:inter} that the test function $u$ is associated to the deformation of $\Sg$ obtained when one leaves by equidistant hypersurfaces and then applies a dilation centered at the vertex to restore the separated volume. This variation decreases the weighted area unless $\Sg$ satisfies equality in \eqref{eq:sign}, which indicates in some weighted sense that $\Sg$ is totally umbilical, compare with \cite[Lem.~3.2]{bdc}. By using Lemma~\ref{lem:umbilical} we conclude that $\Sg$ is the intersection with the cone of a round sphere centered at the vertex. Second, we prove in Theorem~\ref{th:main2} that the statement holds for homogeneous densities of degree $k>0$ and $f$-stable hypersurfaces \emph{that may contain the vertex of the cone}. In fact, an approximation argument similar to the one in \cite[Sect.~3]{morgan-rit} or \cite[Sect.~4]{cones} shows that the proof of Theorem~\ref{th:main} can be carried out even in the case $0\in\Sg$. It is worth pointing out that the hypothesis $k>0$ is essential to perform this approximation scheme since it ensures that the density is bounded near the singularity. We finish Section~\ref{sec:main} with a list of interesting examples and remarks showing the sharpness of our result.

Finally, in Section~\ref{sec:main2} we characterize \emph{strongly $f$-stable hypersurfaces} in solid cones with 
$k$-homogeneous densities. Such hypersurfaces are critical points of the area under a volume constraint, and with non-negative second derivative of the area \emph{for any compactly supported variation}. In Example~\ref{ex:sphere2} we see that the intersection with the cone of any round sphere centered at the vertex is strongly $f$-stable if and only if $k\leq -n$. In fact, as a consequence of Theorem~\ref{th:main} we deduce that these are the unique compact strongly $f$-stable hypersurfaces in the punctured cone when $k<-n$, the cone is convex and $\text{Ric}^k_f\geq 0$. Observe that the case $k=-n$ is not covered in Theorem~\ref{th:main}. This case is particularly interesting since, as an application of the Minkowski formula \eqref{eq:mink2}, any critical point of the weighted area under a volume constraint has vanishing $f$-mean curvature. In Theorem~\ref{th:minimal} we show that, assuming convexity of the cone and the curvature-dimension condition $\text{Ric}^{-n}_f\geq 0$, round spheres centered at the vertex and intersected with the cone are the unique compact strongly $f$-stable hypersurfaces contained in the punctured cone. 

The paper is organized as follows. In Section~\ref{sec:divth} we obtain extensions to manifolds with density of some classical divergence theorems and integration by parts formulae in Riemannian geometry. They provide basic tools that will be used extensively throughout the paper. In Section~\ref{sec:homogeneous} we introduce the family of homogeneous densities in Euclidean solid cones and study their main properties. In Section~\ref{sec:variational} we gather some variational features, with emphasis in the characterization of $f$-stationary and $f$-stable hypersurfaces. Finally Sections~\ref{sec:main} and \ref{sec:main2} contain our classification results for compact $f$-stable and strongly $f$-stable hypersurfaces, respectively.

\section{Divergence theorems in manifolds with density}
\label{sec:divth}
\setcounter{equation}{0}

In this section we introduce suitable notions of divergence for vector fields in manifolds with density. Then we generalize some classical divergence theorems and integration by parts formulae in Riemannian geometry.

Let $M$ be an $(n+1)$-dimensional smooth and oriented Riemannian manifold with boundary $\ptl M$. The Riemannian divergence operator $\divv$ acting on vector fields is the adjoint, with respect to the Riemannian volume $dv$, to the gradient operator $-\nabla$ acting on functions. In fact, for any smooth vector field $X$ in $M$ and any function $\varphi\in C^\infty_0(M)$ vanishing along $\ptl M$, the divergence theorem implies that
\[
\int_M\varphi\,\divv X\,dv=-\int_M\escpr{\nabla\varphi,X}dv,
\]
since $\divv(\varphi X)=\varphi\,\divv X+\escpr{\nabla\varphi,X}$. It follows that the Laplacian $\Delta\varphi:=\divv(\nabla\varphi)$ defines a self-adjoint operator, in the sense that
\[
\int_M \varphi_1\,\Delta\varphi_2\,dv=\int_M \varphi_2\,\Delta\varphi_1\,dv=-\int_M\escpr{\nabla\varphi_1,\nabla\varphi_2}dv,
\]
for any two functions $\varphi_1,\varphi_2\in C^\infty_0(M)$ vanishing along $\ptl M$.

In order to obtain similar formulae in $M$ endowed with a density $f=e^\psi$ we define the $f$-\emph{divergence} of a smooth vector field $X$ by means of equality
\begin{equation}
\label{eq:divvol}
\divv_f X:=(1/f)\divv(fX)=\divv X+\escpr{\nabla\psi,X}.
\end{equation}
It is then easy to check that the operator $\divv_f$ is the adjoint to $-\nabla$ with respect to the weighted volume $dv_f$, that is
\[
\int_M\varphi\,\divv_f X\,dv_f=-\int_M\escpr{\nabla\varphi,X}dv_f,
\]
for any $\varphi\in C^\infty_0(M)$ vanishing along $\ptl M$. Therefore the $f$-\emph{Laplacian operator} given by 
\[
\Delta_f\varphi:=\divv_f(\nabla\varphi)=\Delta\varphi+\escpr{\nabla\psi,\nabla\varphi}
\] 
is self-adjoint with respect to $dv_f$.

As an immediate consequence of the Riemannian divergence theorem and the fact that $\divv_fX\,dv_f=\divv(fX)\,dv$ we can deduce the following result.

\begin{lemma}[Divergence theorem in manifolds with density]
\label{lem:divthvol}
Let $M$ be an oriented Riemannian manifold endowed with a density $f$. For any smooth vector field $X$ with compact support on $M$, and any open set $\Om\subeq M$ with piecewise smooth boundary, we have
\[
\int_\Om\divv_f X\,dv_f=-\int_{\ptl\Om}\escpr{X,N}da_f,
\]
where $N$ is the inner unit normal along $\ptl\Om$. 
\end{lemma}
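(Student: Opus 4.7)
The plan is to reduce the statement to the ordinary Riemannian divergence theorem applied to the auxiliary vector field $Y:=fX$. The crucial bookkeeping identity, built into the definition~\eqref{eq:divvol}, is
\[
\divv_f X\, dv_f=\divv(fX)\, dv,
\]
obtained by multiplying $\divv_f X=(1/f)\,\divv(fX)$ by $dv_f=f\,dv$. This is really the only substantive ingredient; everything else is a matter of rewriting measures on each side.

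First I would check that the hypotheses of the classical divergence theorem apply to $Y=fX$ on $\Om$: because $f$ is a smooth positive function on $M$ and $X$ is smooth with compact support, the product $fX$ is likewise smooth and compactly supported. The standard Riemannian divergence theorem for open sets with piecewise smooth boundary then gives
\[
\int_\Om \divv(fX)\, dv=-\int_{\ptl\Om}\escpr{fX,N}\,da,
\]
with the minus sign reflecting the convention that $N$ is the \emph{inner} unit normal.

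Finally I would translate each side into its weighted version: the left-hand side equals $\int_\Om\divv_f X\,dv_f$ by the identity above, while on the boundary
\[
\escpr{fX,N}\,da=\escpr{X,N}\,f\,da=\escpr{X,N}\,da_f
\]
by the definition of the weighted area element in~\eqref{eq:volarea}. Combining the two yields the claimed formula. No genuine obstacle is expected: the only minor care needed concerns the piecewise smoothness of $\ptl\Om$, which is handled in the usual way by noting that the $(n-1)$-dimensional singular locus has vanishing $n$-dimensional Hausdorff measure and so contributes nothing to the boundary integral — a point unaffected by the smooth positive weight $f$.
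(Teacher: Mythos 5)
Your proposal is correct and follows exactly the paper's own route: the lemma is stated there as an immediate consequence of the Riemannian divergence theorem applied to $fX$ together with the identity $\divv_f X\,dv_f=\divv(fX)\,dv$, which is precisely your argument. The translation of the boundary term via $\escpr{fX,N}\,da=\escpr{X,N}\,da_f$ is the same bookkeeping the paper leaves implicit.
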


Suppose now that $\Sg$ is an orientable smooth hypersurface in $M$ and let $N$ be a unit normal vector along $\Sg$. For any smooth vector field $X$ with compact support on $\Sg$ we denote $X^\bot=\escpr{X,N}N$ and $X^\top=X-X^\bot$. As an application of the Riemannian divergence theorem we get the well-known formula
\begin{equation}
\label{eq:wes}
\int_\Sg\divv_\Sg X\,da=-\int_\Sg nH\escpr{X,N}da-\int_{\ptl\Sg}\escpr{X,\nu}dl,
\end{equation}
where $\divv_\Sg$ is the Riemannian divergence relative to $\Sg$, $H=(-1/n)\divv_\Sg N$ is the Riemannian mean curvature, and $\nu$ is the inner unit normal along $\ptl\Sg$ in $\Sg$. In particular, if $X$ is tangent to $\Sg$ and $\varphi\in C^\infty_0(\Sg)$, then 
\[
\int_\Sg\varphi\,\divv_\Sg X\,da=-\int_\Sg\escpr{\nabla_\Sg\varphi,X}da-\int_{\ptl\Sg}\varphi\,\escpr{X,\nu}dl,
\]
where $\nabla_\Sg$ stands for the gradient operator relative to $\Sg$. This implies the classical integration by parts formula
\[
\int_\Sg\varphi_1\,\Delta_\Sg\varphi_2\,da=-\int_\Sg\escpr{\nabla_\Sg\varphi_1,\nabla_\Sg\varphi_2}da-\int_{\ptl\Sg}\varphi_1\,\frac{\ptl\varphi_2}{\ptl\nu}\,dl,
\]
for $\varphi_1,\varphi_2\in C^\infty_0(\Sg)$, where $\Delta_\Sg\varphi:=\divv_\Sg(\nabla_\Sg\varphi)$ is the Laplacian operator relative to $\Sg$ and $\ptl\varphi_2/\ptl\nu$ denotes the directional derivative of $\varphi_2$ with respect to $\nu$.

Given a density $f=e^\psi$ in $M$, we define the $f$-\emph{divergence relative} to $\Sg$ of $X$ as the function
\begin{equation}
\label{eq:divsup}
\divv_{\Sg,f}X:=\divv_\Sg X+\escpr{\nabla\psi,X}.
\end{equation} 
For any $\varphi\in C^\infty(\Sg)$ it is easy to check that
\begin{equation}
\label{eq:beq}
\divv_{\Sg,f}(\varphi X)=\varphi\,\divv_{\Sg,f}X+\escpr{\nabla_\Sg\varphi,X}.
\end{equation}
The $f$-\emph{mean curvature} of $\Sg$ with respect to $N$ is the function
\begin{equation}
\label{eq:fmc}
H_f:=-\divv_{\Sg,f} N=nH-\escpr{\nabla\psi,N}.
\end{equation}
The previous definition of $H_f$ coincides with the ones introduced in \cite{gromov-GAFA}, \cite[Chap.~3]{bayle-thesis}, \cite[Sect.~3]{rcbm}, and it is related to the first derivative of the weighted area functional, see \eqref{eq:1st}. 

We can now prove the following generalization of formula \eqref{eq:wes}.

\begin{lemma}[Divergence theorem for hypersurfaces in manifolds with density]
\label{lem:divthsup}
Let $\Sg$ be a smooth hypersurface with unit normal vector $N$ in an oriented Riemannian manifold endowed with a density function $f=e^\psi$. Then, for any smooth vector field $X$ with compact support on $\Sg$, we have
\[
\int_\Sg\divv_{\Sg,f}X\,da_f=-\int_\Sg H_f\escpr{X,N}da_f
-\int_{\ptl\Sg}\escpr{X,\nu}dl_f,
\]
where $H_f$ is the $f$-mean curvature defined in \eqref{eq:fmc} and $\nu$ is the inner unit normal along $\ptl\Sg$ in $\Sg$.
\end{lemma}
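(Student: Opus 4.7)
The plan is to derive the weighted formula from the classical one in \eqref{eq:wes} by applying it to the rescaled vector field $fX$, and then reorganizing the resulting terms using the relation $\nabla f = f\,\nabla\psi$ between $f$ and its log-density $\psi$.

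First, I would apply \eqref{eq:wes} directly to $fX$, which still has compact support on $\Sg$. This yields
\[
\int_\Sg\divv_\Sg(fX)\,da=-\int_\Sg nH\,\escpr{fX,N}\,da-\int_{\ptl\Sg}\escpr{fX,\nu}\,dl.
\]
Next I would expand the integrand on the left using the Leibniz rule, $\divv_\Sg(fX)=f\,\divv_\Sg X+\escpr{\nabla_\Sg f,X}$, and use the identity $\nabla_\Sg f=\nabla f-\escpr{\nabla f,N}N$ together with $\nabla f=f\,\nabla\psi$ to obtain
\[
\escpr{\nabla_\Sg f,X}=f\,\escpr{\nabla\psi,X}-f\,\escpr{\nabla\psi,N}\,\escpr{X,N}.
\]

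Substituting this back and pulling $f$ through the boundary and volume measures (so that $f\,da=da_f$ and $f\,dl=dl_f$), the left-hand side becomes
\[
\int_\Sg\big(\divv_\Sg X+\escpr{\nabla\psi,X}\big)\,da_f-\int_\Sg\escpr{\nabla\psi,N}\,\escpr{X,N}\,da_f,
\]
the first integrand of which is exactly $\divv_{\Sg,f}X$ by \eqref{eq:divsup}. Moving the second integral to the other side and combining it with the $nH$ term, the coefficient of $\escpr{X,N}\,da_f$ becomes $nH-\escpr{\nabla\psi,N}$, which is precisely $H_f$ by \eqref{eq:fmc}. The boundary term becomes $-\int_{\ptl\Sg}\escpr{X,\nu}\,dl_f$, giving the stated identity.

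There is no real obstacle here beyond careful bookkeeping: the computation is a one-line manipulation once one notices that the normal component $\escpr{\nabla\psi,N}\escpr{X,N}$ that appears from projecting $\nabla f$ onto $T\Sg$ is exactly what is needed to upgrade the Riemannian mean curvature $nH$ to the weighted mean curvature $H_f$. The assumption that $X$ is smooth with compact support on $\Sg$ ensures that the boundary integral is well defined and that \eqref{eq:wes} applies without further hypotheses on $\Sg$ or $\ptl\Sg$.
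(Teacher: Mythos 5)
Your proof is correct and is essentially the paper's own argument run in the opposite direction: the paper starts from $\int_\Sg\divv_{\Sg,f}X\,da_f$ and rewrites it via $\divv_\Sg(fX)$ before invoking \eqref{eq:wes}, while you apply \eqref{eq:wes} to $fX$ first and then expand. The key observation is identical in both cases, namely that the normal component $f\escpr{\nabla\psi,N}\escpr{X,N}$ of $\escpr{\nabla f,X}$ is exactly what converts $nH$ into $H_f$.
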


\begin{proof}
By using equality $\divv_\Sg(fX)=f\divv_\Sg X+\escpr{\nabla_\Sg f,X}$ and equation \eqref{eq:wes}, we get
\begin{align*}
\int_\Sg\divv_{\Sg,f}X\,da_f&=\int_\Sg f\,\big(\divv_\Sg X+\escpr{\nabla\psi,X}\big)\,da
\\
&=\int_\Sg\divv_\Sg(fX)\,da+\int_\Sg\escpr{\nabla f-\nabla_\Sg f,X}da
\\
&=-\int_\Sg nHf\escpr{X,N}da-\int_{\ptl\Sg}f\escpr{X,\nu}dl+\int_\Sg f\escpr{\nabla\psi,N}\escpr{X,N}da.
\\
&=-\int_\Sg H_f\escpr{X,N}da_f-\int_{\ptl\Sg}\escpr{X,\nu}dl_f,
\end{align*}
and the proof follows.
\end{proof}

Finally, we define the $f$-\emph{Laplacian relative} to $\Sg$ of a function $\varphi\in C^\infty(\Sg)$ as the second order linear operator
\begin{equation}
\label{eq:fsigmalaplacian}
\Delta_{\Sg,f\,}\varphi:=\divv_{\Sg,f}(\nabla_\Sg\varphi)=\Delta_\Sg\varphi+\escpr{\nabla_\Sg\psi,\nabla_\Sg\varphi}.
\end{equation}
As an immediate consequence of Lemma~\ref{lem:divthsup} and equation \eqref{eq:beq} we get this result.

\begin{corollary}[Integration by parts for hypersurfaces in manifolds with density]
\label{cor:ibp}
Let $\Sg$ be a smooth orientable hypersurface in an oriented Riemannian manifold with a density function $f=e^\psi$. Then, for any two functions $\varphi_1,\varphi_2\in C^\infty_0(\Sg)$, we have
\[
\int_\Sg\varphi_1\,\Delta_{\Sg,f}\,\varphi_2\,da_f=-\int_\Sg\escpr{\nabla_\Sg\varphi_1,\nabla_\Sg\varphi_2}da_f-\int_{\ptl\Sg}\varphi_1\,\frac{\ptl\varphi_2}{\ptl\nu}\,dl_f,
\]
where $\nu$ is the inner unit normal along $\ptl\Sg$ in $\Sg$.
\end{corollary}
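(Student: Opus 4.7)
The plan is to apply Lemma~\ref{lem:divthsup} to the tangential vector field $X := \varphi_1\,\nabla_\Sg\varphi_2$, which has compact support on $\Sg$ because $\varphi_1,\varphi_2\in C^\infty_0(\Sg)$. Since $X$ is tangent to $\Sg$, we have $\escpr{X,N}=0$, so the $H_f$-term appearing in the divergence theorem vanishes automatically. This is precisely the reason one chooses this particular test field: it kills the unwanted mean-curvature contribution and leaves only the interior and boundary pieces we want.

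Next I would use the Leibniz-type identity \eqref{eq:beq}, namely
\[
\divv_{\Sg,f}(\varphi_1\,\nabla_\Sg\varphi_2)=\varphi_1\,\divv_{\Sg,f}(\nabla_\Sg\varphi_2)+\escpr{\nabla_\Sg\varphi_1,\nabla_\Sg\varphi_2},
\]
and recognize $\divv_{\Sg,f}(\nabla_\Sg\varphi_2)=\Delta_{\Sg,f}\,\varphi_2$ by the definition \eqref{eq:fsigmalaplacian}. Integrating this identity against $da_f$ over $\Sg$ splits the left-hand side of Lemma~\ref{lem:divthsup} into the two interior integrals in the claim.

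Finally, for the boundary term in Lemma~\ref{lem:divthsup}, I would observe that along $\ptl\Sg$,
\[
\escpr{X,\nu}=\varphi_1\,\escpr{\nabla_\Sg\varphi_2,\nu}=\varphi_1\,\frac{\ptl\varphi_2}{\ptl\nu},
\]
so the boundary integral reproduces exactly the last term of the stated formula. Combining these pieces and rearranging yields the claim.

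I do not expect any genuine obstacle here: the statement is a direct corollary in the sense announced before the statement, and the only care needed is to verify that the product rule \eqref{eq:beq} (which is stated for $\varphi\in C^\infty(\Sg)$) does apply with $\varphi=\varphi_1$ and $X=\nabla_\Sg\varphi_2$, and that $\varphi_1\,\nabla_\Sg\varphi_2$ has compact support on $\Sg$ so that Lemma~\ref{lem:divthsup} is applicable. Both are immediate from $\varphi_1,\varphi_2\in C^\infty_0(\Sg)$.
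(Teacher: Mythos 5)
Your argument is correct and is exactly the one the paper intends: the result is stated as an immediate consequence of Lemma~\ref{lem:divthsup} applied to the tangential field $\varphi_1\,\nabla_\Sg\varphi_2$ together with the product rule \eqref{eq:beq}, with the $H_f$-term vanishing because the field is tangent. Nothing further is needed.
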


\section{Homogeneous densities in Euclidean solid cones}
\label{sec:homogeneous}
\setcounter{equation}{0}

In this section we introduce and study some densities defined on solid cones of $\rrn$ whose associated weighted volume and area have a nice behaviour with respect to the family of dilations centered at the origin.

By a (smooth) \emph{solid cone} in $\rrn$ we mean a cone
\[
M=0\cone\de:=\{t\,p\,; \ t\geq 0, \ p\in\de\},
\]
where $\de$ is a smooth region (the union of a connected open set together with its $C^\infty$ boundary) of the unit sphere $\sph^n$. Clearly $M$ is closed, connected, and invariant under the family of dilations $h_t(p):=tp$ defined for $t>0$ and $p\in\rrn$. We denote by $\ptl M$ and $\text{int}(M)$ the topological boundary and interior of $M$, respectively. We will use the notation $M^*$ for the punctured cone $M-\{0\}$. Note that $M$ coincides with a closed half-space of $\rrn$ when $\de$ is a hemisphere. In the case $\de=\sph^n$ we get $M=\rrn$.

Let $M$ be a solid cone and $f=e^\psi$ a (smooth) density on $M^*$. Recall that $V_f(\Om)$ is the weighted volume of a set 
$\Om\subseteq M$ as defined in \eqref{eq:volarea}. For a smooth hypersurface $\Sg$ in $M$ we denote by $A_f(\Sg)$ the weighted area $A_f(\Sg,\text{int}(M))$ given in \eqref{eq:volarea}. According to this definition $\Sg\cap\ptl M$ does not contribute to $A_f(\Sg)$. We are interested in densities for which the functionals $V_f$ and $A_f$ are homogeneous with respect to dilations centered at $0$.

\begin{lemma}
\label{lem:vap}
Let $M\subeq\rrn$ be a solid cone and $f=e^\psi$ a density function on $M^*$. For any $t>0$, let $h_t(p):=tp$ with $p\in\rrn$. The following statements are equivalent:
\begin{itemize}
\item[(i)] there is $k\in\rr$ such that $V_f(h_t(\Om))=t^{n+k+1}\,V_f(\Om)$ for any $t>0$ and any $\Om\subseteq M$,
\item[(ii)] $f$ is $k$-homogeneous, i.e., $f(h_t(p))=t^k\,f(p)$ for any $t>0$ and any $p\in M^*$.
\end{itemize} 
Moreover, in such a case, we also have $A_f(h_t(\Sg))=t^{n+k}\,A_f(\Sg)$ for any smooth hypersurface $\Sg$ in $M$.
\end{lemma}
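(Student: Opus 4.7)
The plan is to handle (ii)$\Rightarrow$(i) and the area formula by a direct change of variables under the dilation $h_t$, and to invert the implication by extracting pointwise information from the integral identity via a Lebesgue differentiation argument. The only subtle point will be this last step: recovering the homogeneity of $f$ at a given point from an identity that only controls averages.

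For (ii)$\Rightarrow$(i), the dilation $h_t:\rrn\to\rrn$ has constant Jacobian determinant $t^{n+1}$, so changing variables and using $f\circ h_t=t^k f$ yields
\[
V_f(h_t(\Om))=\int_{h_t(\Om)}f\,dv=t^{n+1}\int_\Om f(h_t(p))\,dv(p)=t^{n+k+1}\,V_f(\Om).
\]

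For (i)$\Rightarrow$(ii), I would fix $p\in\text{int}(M)$ and choose $r_0>0$ so that the Euclidean ball $B_{r_0}(p)$ lies in $\text{int}(M)$. For any $0<r<r_0$ and $t>0$ one has $h_t(B_r(p))=B_{tr}(tp)\sub M^*$, so applying (i) with $\Om=B_r(p)$ gives
\[
\int_{B_{tr}(tp)}f\,dv=t^{n+k+1}\int_{B_r(p)}f\,dv.
\]
Dividing both sides by $\vol(B_{tr}(tp))=t^{n+1}\,\vol(B_r(p))$ and letting $r\to 0^+$, the continuity of $f$ on $M^*$ together with the Lebesgue differentiation theorem produce the pointwise identity $f(tp)=t^k f(p)$. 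Boundary points $p\in\ptl M\cap M^*$ are then handled by continuity of $f$, or alternatively by repeating the argument with the half-balls $B_r(p)\cap M$, whose Euclidean volumes still satisfy $\vol(B_{tr}(tp)\cap M)=t^{n+1}\,\vol(B_r(p)\cap M)$ thanks to the invariance of $M$ under $h_t$.

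Finally, $h_t$ preserves both $M$ and $\text{int}(M)$, and its restriction to any smooth hypersurface $\Sg$ scales the induced area element by $t^n$. The same change of variables as before therefore gives
\[
A_f(h_t(\Sg))=\int_{h_t(\Sg)\cap\text{int}(M)}f\,da=t^n\int_{\Sg\cap\text{int}(M)}f(h_t(p))\,da(p)=t^{n+k}\,A_f(\Sg),
\]
as claimed. The main (mild) obstacle is the pointwise extraction in the reverse implication; no deeper issue is expected.
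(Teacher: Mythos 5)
Your proposal is correct and follows essentially the same route as the paper: change of variables with Jacobian $t^{n+1}$ (resp.\ $t^n$) for the volume and area identities, and extraction of the pointwise homogeneity of $f$ from the integral identity using continuity. The only difference is that you spell out, via shrinking balls and Lebesgue differentiation, the localization step that the paper dispatches with the single remark ``since $f$ is continuous on $M^*$'' (just make sure your balls also avoid the origin, where $f$ need not be defined).
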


\begin{proof}
For any $t>0$ the Jacobian determinant of the dilation $h_t:\rrn\to\rrn$ equals $t^{n+1}$. Given a set $\Om\subeq M$ we get, by the change of variables formula, that
\begin{equation}
\label{eq:oso}
V_f(h_t(\Om))=\int_\Om t^{n+1}\,f(h_t(p))\,dv.
\end{equation}
If statement (i) holds, then we have 
\[
\int_\Om t^{n+1}\,f(h_t(p))\,dv=\int_\Om t^{n+k+1}\,f(p)\,dv,
\]
for any set $\Om\subeq M$ and any $t>0$. This equality implies (ii) since $f$ is continuous on $M^*$. On the other hand, statement (i) follows from (ii) by using \eqref{eq:oso}. Finally, the behaviour of $A_f$ with respect to $h_t$ comes again from the change of variables formula since the Jacobian determinant of the diffeomorphism $h_t:\Sg\to h_t(\Sg)$ equals $t^n$.
\end{proof}

The previous result leads us to the next definition. Let $M$ be a solid cone of $\rrn$ and $k\in\rr$. By a (smooth) \emph{homogenous density of degree} $k$ on $M$ we mean a $C^\infty$ positive function $f=e^\psi$ on $M^*$ whose restriction to any open segment leaving from $0$ is a monomial of degree $k$, i.e., $f(tp)=t^kf(p)$ for any $t>0$ and any $p\in M^*$. Sometimes we will simply say that $f$ is a \emph{$k$-homogeneous density}. Such a density is uniquely determined by its values on $\de=M\cap\sph^n$. In fact, we have
\begin{equation}
\label{eq:caract}
f(p)=f\left(\frac{p}{|p|}\right)\,|p|^k=(\eta\circ\pi)(p)\,|p|^k, \quad p\in M^*,
\end{equation}
where $\eta$ is the restriction of $f$ to $\de$ and $\pi:M^*\to\de$ is the retraction $\pi(p):=p/|p|$. 

\begin{remark}
\label{re:singularity}
The continuity of $f$ gives the existence of constants $A,B>0$ such that $A\leq(\eta\circ\pi)(p)\leq B$ for any $p\in M^*$. Therefore, for $k\neq 0$, the density $f$ has well-defined limits when $|p|\to 0$ and $|p|\to+\infty$. More precisely
\begin{equation}
\label{eq:limits}
\lim_{|p|\to 0}f(p)=
\begin{cases}
0, & \text{ if } k>0
\\
+\infty, & \text{ if } k<0
\end{cases}
\quad \text{ and }\quad
\lim_{|p|\to+\infty}f(p)=
\begin{cases}
+\infty, & \text{ if } k>0
\\
0, & \text{ if } k<0
\end{cases}.
\end{equation}  
Hence the origin may be understood as a singularity, in the sense that $f$ cannot be continuously extended to $M$ as a finite positive density. In the case $k=0$ the limits in \eqref{eq:limits} do not exist unless $f$ is a constant function. 
\end{remark}

\begin{examples}
1. A homogeneous density of degree $k=0$ is determined by a smooth function which is constant and positive along any open segment of the cone starting from $0$. As for the constant density $f=1$, weighted volume and area are homogeneous of degree $n+1$ and $n$, respectively, with respect to dilations centered at $0$.

2. Let $f$ be a linear function or a quadratic form on $\rrn$. Then, the restriction of $f$ to any solid cone where $f>0$ provides a homogeneous density of degree $1$ or $2$, respectively.

3. The radial densities $f(p)=|p|^k$ are homogenous of degree $k$. In fact, it follows easily from \eqref{eq:caract} that a $k$-homogeneous density $f$ is radial if and only if $f(p)=c\,|p|^k$ for some constant $c>0$.

4. Formula \eqref{eq:caract} shows that the family of homogeneous densities is extremely large. In fact, if $\eta:\de\to\rr$ is any smooth positive function on a smooth region $\de\subeq\sph^n$, then $f(p):=\eta(p/|p|)\,|p|^k$ defines a $k$-homogeneous density on the cone $M=0\cone\de$.
\end{examples}

As a consequence of the singularity at the origin, the weighted volume and area of a compact hypersurface containing $0$ may be infinite for a homogeneous density of degree $k<0$. This is illustrated in the next examples.

\begin{examples}
\label{ex:infinite}
1. Let $\Sg$ be a round sphere in $\rrn$  endowed with a homogeneous density of degree $k\leq-(n+1)$. By using polar coordinates it is easy to check that any conical sector $\{t\,p\,; \, 0\leq t\leq a, \, p\in R\}$ over a region $R\subeq\sph^n$ has infinite weighted volume. If $0\in\Sg$ then any connected component of $\rrn-\Sg$ contains at least one of these sectors, so that both components have infinite volume.

2. Any compact embedded hypersurface $\Sg$ containing $0$ in $\rrn$ with a homogeneous density of degree $k\leq-n$ satisfies $A_f(\Sg)=+\infty$. To see this we consider the function $m(r):=A(\Sg\cap B_r)/A(\ptl B_r)$, where $A(\cdot)$ stands for the Euclidean area and $B_r$ denotes the open ball of radius $r$ centered at $0$. It is well known that $\lim_{r\to 0^+}m(r)=1$. By taking into account \eqref{eq:caract}, we can find  constants $c, c'>0$ such that
\[
A_f(\Sg\cap B_r)\geq c\,r^k A(\Sg\cap B_r)=c'\,r^{n+k}\,m(r).
\]
We deduce that, if $n+k\leq 0$, then $A_f(\Sg\cap B_r)$ does not approach $0$ when $r\to 0$. As a consequence $A_f(\Sg)=+\infty$.
\end{examples}

In the following result we gather some analytical properties of homogeneous densities that will be used throughout the paper.  

\begin{lemma}
\label{lem:prop}
For a $k$-homogeneous density $f=e^\psi$ on a solid cone $M\subeq\rrn$ the follo\-wing equalities hold:
\begin{itemize}
\item[(i)] $(\nabla\psi)(tp)=t^{-1}\,(\nabla\psi)(p)$, for any $t>0$ and any $p\in M^*$,
\item[(ii)] $\escpr{(\nabla\psi)(p),p}=k$, for any $p\in M^*$,
\item[(iii)] $(\nabla^2\psi)_p(p,p)=-k$, for any $p\in M^*$.
\end{itemize}
\end{lemma}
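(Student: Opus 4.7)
The plan is to start from the defining relation of $k$-homogeneity, write it in logarithmic form, and then differentiate it in the two variables $t$ and $p$ separately to obtain the three equalities. Concretely, since $f(tp)=t^k f(p)$ for all $t>0$ and $p\in M^*$, taking $\log$ on both sides gives the fundamental identity
\begin{equation*}
\psi(tp)=k\log t+\psi(p).
\end{equation*}
All three statements will follow just by differentiating this identity.

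For (i), I would fix $t$ and take the gradient of both sides with respect to $p$. By the chain rule the left-hand side produces $t\,(\nabla\psi)(tp)$, whereas the right-hand side has gradient $(\nabla\psi)(p)$ (the term $k\log t$ is constant in $p$). Hence $t\,(\nabla\psi)(tp)=(\nabla\psi)(p)$, which is exactly (i).

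For (ii), I would instead differentiate with respect to $t$ at $t=1$. Chain rule on the left gives $\langle(\nabla\psi)(p),p\rangle$, while the right-hand side differentiates to $k/t$, evaluated at $t=1$ to yield $k$. This is precisely Euler's homogeneous function theorem applied to $\psi$, and one could equivalently obtain it by writing $\langle\nabla f,X\rangle=kf$ with $X(p)=p$ and dividing by $f$.

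For (iii), the cleanest route is to differentiate the identity in (ii) in $p$. The function $g(p):=\langle(\nabla\psi)(p),p\rangle$ is identically equal to the constant $k$, so $\nabla g\equiv 0$. Since $\nabla X=\mathrm{Id}$, the product rule gives
\begin{equation*}
0=\nabla g(p)=(\nabla^2\psi)_p\cdot p+(\nabla\psi)(p),
\end{equation*}
so that $(\nabla^2\psi)_p\cdot p=-(\nabla\psi)(p)$. Taking the inner product with $p$ and using (ii), one concludes $(\nabla^2\psi)_p(p,p)=-\langle(\nabla\psi)(p),p\rangle=-k$. Alternatively, one could differentiate (i) in $t$ at $t=1$ to reach the same vector identity $(\nabla^2\psi)_p\cdot p=-(\nabla\psi)(p)$.

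There is no real obstacle here: the lemma is essentially a coordinate-free repackaging of Euler's theorem for homogeneous functions, applied to $\psi=\log f$, together with one extra differentiation to reach the Hessian. The only minor points requiring care are keeping track of whether one is differentiating in $t$ or in $p$, and recognizing that even though $\psi$ itself is not homogeneous, the shift $\psi(tp)-\psi(p)=k\log t$ depends only on $t$, which is what makes all the differentiations collapse to clean expressions.
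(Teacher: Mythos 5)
Your proof is correct and follows essentially the same route as the paper: both start from the logarithmic identity $\psi(tp)=k\log t+\psi(p)$, obtain (i) and (ii) by differentiating it in $p$ and in $t$ respectively, and deduce (iii) by one further differentiation. The paper states these steps without detail; your write-up simply fills them in accurately.
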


\begin{proof}
The $k$-homogeneity of $f$ implies that $\psi(tp)=k\log(t)+\psi(p)$, for any $t>0$ and any $p\in M^*$.  Equalities (i) and (ii) can be obtained by differentiating in the previous formula. Equality (iii) is easily deduced from (i) and (ii). 
\end{proof}

The equalities in the previous lemma also follow from the calculus of the gradient and the Hessian of $\psi$. This  computation is contained in the next lemma, which allows us to characterize the nonnegativity of the $k$-dimensional Bakry-\'Emery-Ricci tensor defined in \eqref{eq:fkricci}.

\begin{lemma}
\label{lem:gradhess}
Let $M=0\cone\mathcal{D}$ be a solid cone over a smooth region $\mathcal{D}\subeq\sph^n$. Given a $k$-homogeneous density $f=e^\psi$ on $M$ and a point $p\in M^*$, the following equalities hold:
\begin{align*}
(\nabla\psi)(p)&=\frac{k}{|p|}\,\pi(p)+\frac{1}{|p|}\,(\nabla_{\sph^n}\mu)(\pi(p)),
\\
(\nabla^2\psi)_p(p,p)&=-k,
\\
(\nabla^2\psi)_p(p,v)&=\frac{-1}{|p|}\,\escpr{(\nabla_{\sph^n}\mu)(\pi(p)),v}, \ \emph{ for any } v\in T_{\pi(p)}\sph^n,
\\
(\nabla^2\psi)_p(u,v)&=\frac{k}{|p|^2}\escpr{u,v}+
\frac{1}{|p|^2}\,(\nabla^2_{\sph^n}\mu)_{\pi(p)}(u,v),\ \emph{ for any } u,v\in T_{\pi(p)}\sph^n,
\end{align*}
where $\mu=\psi_{|\mathcal{D}}$, the map $\pi:M^*\to\mathcal{D}$ is the retraction $\pi(p):=p/|p|$, and $\nabla_{\sph^n}\mu$, $\nabla^2_{\sph^n}\mu$ denote the gradient and the Hessian relative to $\sph^n$ of $\mu$.
\end{lemma}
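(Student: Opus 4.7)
The plan is to reduce all four identities to a single scalar decomposition. From \eqref{eq:caract} and the definition $\mu=\psi_{|\de}$, one has
\[
\psi(p)=k\log|p|+\mu(\pi(p))\qquad\text{for every }p\in M^*,
\]
since $f_{|\de}=e^{\mu}$. The function $\mu\circ\pi$ is well defined on $M^*$ and is constant along each ray from the origin, so $\nabla(\mu\circ\pi)(p)\in T_{\pi(p)}\sph^n$. For the gradient I would add $\nabla(k\log|p|)=(k/|p|)\pi(p)$ to $\nabla(\mu\circ\pi)(p)$; the latter is computed by the chain rule $d(\mu\circ\pi)_p(v)=\escpr{(\nabla_{\sph^n}\mu)(\pi(p)),d\pi_p(v)}$ with $d\pi_p(v)=v/|p|-\escpr{v,p}\,p/|p|^3$. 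Since $(\nabla_{\sph^n}\mu)(\pi(p))\perp\pi(p)$, the second summand of $d\pi_p(v)$ drops out, leaving exactly $(1/|p|)(\nabla_{\sph^n}\mu)(\pi(p))$ and proving the first identity.

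For the Hessian identities (ii) and (iii), I would evaluate $\nabla\psi$ along the radial curve $\gamma(t)=(1+t)p$, along which $|\gamma(t)|=(1+t)|p|$ and $\pi(\gamma(t))=\pi(p)$. The gradient formula just derived gives
\[
\nabla\psi(\gamma(t))=\frac{k}{(1+t)|p|^2}\,p+\frac{1}{(1+t)|p|}\,(\nabla_{\sph^n}\mu)(\pi(p)),
\]
so its $t$-derivative at $t=0$ equals $-(k/|p|^2)\,p-(1/|p|^2)\,(\nabla_{\sph^n}\mu)(\pi(p))$. Pairing with $p$ yields (ii) (using $(\nabla_{\sph^n}\mu)(\pi(p))\perp p$), and pairing with $v\in T_{\pi(p)}\sph^n$ yields (iii) (using $v\perp p$).

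Identity (iv) is the only one requiring an extrinsic/intrinsic comparison on $\sph^n$, and is where I expect most of the work. For $u,v\in T_{\pi(p)}\sph^n$ I would compute $(\nabla^2\psi)_p(u,v)=\tfrac{d}{ds}\big|_{s=0}\escpr{\nabla\psi(p+su),v}$. Since $u\perp p$, one has $|p+su|=|p|+O(s^2)$ and $\pi(p+su)=\pi(p)+(s/|p|)u+O(s^2)$. Differentiating the radial summand $k(p+su)/|p+su|^2$ and pairing with $v$ contributes $(k/|p|^2)\escpr{u,v}$. The remaining contribution is the Euclidean derivative $D_{u/|p|}(\nabla_{\sph^n}\mu)$ at $\pi(p)$, which I would expand via the Gauss formula for $\sph^n\hookrightarrow\rrn$: for a tangent vector field $Y$ on $\sph^n$ and a tangent vector $X$ at $q\in\sph^n$,
\[
D_X Y=\nabla^{\sph^n}_X Y-\escpr{X,Y}\,q.
\]
Applied with $Y=\nabla_{\sph^n}\mu$, $X=u/|p|$ and $q=\pi(p)$, the normal correction vanishes on pairing with $v\perp\pi(p)$, and the tangential part contributes $(1/|p|^2)(\nabla^2_{\sph^n}\mu)_{\pi(p)}(u,v)$ by the intrinsic definition of the Hessian. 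Adding both contributions yields (iv). The main obstacle is the bookkeeping of orthogonality relations and the correct use of the Gauss formula; once these are in place the four identities follow directly.
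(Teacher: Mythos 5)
Your proof follows essentially the same route as the paper: both split $\psi(p)=k\log|p|+\mu(\pi(p))$ and compute the gradient and Hessian of the radial and angular summands separately, the paper merely asserting as ``straightforward'' the computations that you carry out explicitly via the chain rule for $\pi$ and the Gauss formula for $\sph^n\hookrightarrow\rrn$. The only flaw is an arithmetic slip in the radial-curve step: the $t$-derivative at $t=0$ of the angular summand $\frac{1}{(1+t)|p|}\,(\nabla_{\sph^n}\mu)(\pi(p))$ is $-\frac{1}{|p|}\,(\nabla_{\sph^n}\mu)(\pi(p))$, not $-\frac{1}{|p|^2}\,(\nabla_{\sph^n}\mu)(\pi(p))$; taken at face value your displayed intermediate would yield $(\nabla^2\psi)_p(p,v)=-|p|^{-2}\escpr{(\nabla_{\sph^n}\mu)(\pi(p)),v}$, which differs from the third identity by a factor of $|p|$. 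With the corrected coefficient, pairing with $p$ and with $v\in T_{\pi(p)}\sph^n$ gives (ii) and (iii) exactly as you claim, and your treatment of (iv) (including the observation that the normal correction $-\escpr{X,Y}\,q$ in the Gauss formula dies against $v\perp\pi(p)$) is correct, so the argument stands once that coefficient is fixed.
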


\begin{proof}
Equation \eqref{eq:caract} implies that $\psi=\text{log}(f)=\psi_1+\psi_2$, where $\psi_1(p):=(\mu\circ\pi)(p)$ and $\psi_2(p):=k\log(|p|)$ for any $p\in M^*$. To prove the statement we compute the gradient and the Hessian of $\psi_i$ for any $i=1,2$. Fix a point $p\in M^*$. On the one hand, it is easy to check that
\[
(\nabla\psi_2)(p)=\frac{k}{|p|}\,\pi(p),\quad 
(\nabla^2\psi_2)_p(u,v)=\frac{k}{|p|^2}\escpr{u,v}-\frac{2k}{|p|^2}\escpr{u,\pi(p)}\escpr{v,\pi(p)},
\]
for any two vectors $u,v\in\rrn$. On the other hand, a straightforward computation gives
\[
(\nabla\psi_1)(p)=\frac{1}{|p|}\,(\nabla_{\sph^n}\mu)(\pi(p)),
\]
whereas
\begin{align*}
(\nabla^2\psi_1)_p(p,v)&=-\escpr{(\nabla\psi_1)(p),v}
=\frac{-1}{|p|}\escpr{(\nabla_{\sph^n}\mu)(\pi(p)),v}, \ \text{ for any } v\in T_{\pi(p)}\sph^n,
\\
(\nabla^2\psi_1)_p(u,v)&=\frac{1}{|p|^2}\,(\nabla^2_{\sph^n}\mu)_{\pi(p)}(u,v), \ \text{ for any } u,v\in T_{\pi(p)}\sph^n.
\end{align*}
Combining the previous equalities the proof follows.
\end{proof}

\begin{corollary}
\label{cor:ricgeq0}
Let $M=0\cone\mathcal{D}$ be a solid cone over a smooth region $\mathcal{D}\subeq\sph^n$. Consider a $k$-homogeneous density $f=e^\psi$ on $M$ and denote $\mu=\psi_{|\mathcal{D}}$. Then, the $k$-dimensional Bakry-\'Emery-Ricci tensor in \eqref{eq:fkricci} satisfies $\emph{Ric}^k_f\geq 0$ on $M^*$ if and only if 
\[
(\nabla^2_{\sph^n}\mu)_p(v,v)\leq\frac{-1}{k}\,\escpr{(\nabla_{\sph^n}\mu)(p),v}^2-k,
\]
for any $p\in\mathcal{D}$ and any unit vector $v\in T_p\sph^n$.
\end{corollary}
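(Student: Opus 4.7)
The plan is to reduce the inequality $\text{Ric}_f^k\geq 0$ on $M^*$ to a pointwise inequality on the sphere by decomposing tangent vectors into a radial direction and a direction tangent to $\mathbb{S}^n$, and then invoking the explicit formulas from the previous lemma.

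First, since the ambient metric on $M^*\subseteq\rrn$ is flat, the Ricci tensor vanishes, so from \eqref{eq:fkricci} one has at any $p\in M^*$ and $w\in T_p\rrn=\rrn$:
\[
\text{Ric}_f^k(w,w)=-(\nabla^2\psi)_p(w,w)-\frac{1}{k}\,\escpr{(\nabla\psi)(p),w}^2.
\]
My first step is to observe that the radial direction $p$ is always a null direction of $\text{Ric}_f^k$. Indeed, from the previous lemma together with the equality $\escpr{(\nabla\psi)(p),p}=k$ given in Lemma~\ref{lem:prop}(ii), one checks that $\text{Ric}_f^k(p,p)=-(-k)-\frac{1}{k}k^2=0$, and for any $v\in T_{\pi(p)}\mathbb{S}^n$ (so $\escpr{v,\pi(p)}=0$), the mixed term $(\nabla^2\psi)_p(p,v)=-\frac{1}{|p|}\escpr{(\nabla_{\sph^n}\mu)(\pi(p)),v}$ and the factor $\escpr{(\nabla\psi)(p),v}=\frac{1}{|p|}\escpr{(\nabla_{\sph^n}\mu)(\pi(p)),v}$ combine so that $\text{Ric}_f^k(p,v)=0$.

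Hence $p\in\ker\text{Ric}_f^k$ at every $p\in M^*$. Writing an arbitrary $w\in\rrn$ as $w=\alpha p+v$ with $v\in T_{\pi(p)}\sph^n$ and using bilinearity, nonnegativity of $\text{Ric}_f^k$ at $p$ is equivalent to nonnegativity of $\text{Ric}_f^k(v,v)$ for all $v\in T_{\pi(p)}\sph^n$. For such $v$, again by Lemma~\ref{lem:gradhess}, I would compute
\[
(\nabla^2\psi)_p(v,v)=\frac{k}{|p|^2}\,|v|^2+\frac{1}{|p|^2}\,(\nabla^2_{\sph^n}\mu)_{\pi(p)}(v,v),\qquad\escpr{(\nabla\psi)(p),v}=\frac{1}{|p|}\escpr{(\nabla_{\sph^n}\mu)(\pi(p)),v},
\]
whence
\[
|p|^2\,\text{Ric}_f^k(v,v)=-k\,|v|^2-(\nabla^2_{\sph^n}\mu)_{\pi(p)}(v,v)-\frac{1}{k}\escpr{(\nabla_{\sph^n}\mu)(\pi(p)),v}^2.
\]

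Finally, since any $q\in\mathcal{D}$ is of the form $\pi(p)$ for some $p\in M^*$ and both sides are quadratic in $v$, nonnegativity of the displayed expression for all $v\in T_{\pi(p)}\sph^n$ is equivalent to the stated inequality for unit vectors $v\in T_q\sph^n$. The only real work is the bookkeeping in the preceding step; the main conceptual point, that the radial direction lies in the kernel of $\text{Ric}_f^k$, is a direct manifestation of the scale invariance encoded in the $k$-homogeneity of $f$, and I expect no serious obstacle beyond carefully unpacking the formulas of Lemma~\ref{lem:gradhess}.
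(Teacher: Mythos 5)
Your proof is correct and follows essentially the same route as the paper: decompose an arbitrary vector into its radial and $\sph^n$-tangential parts and combine the formulas of Lemma~\ref{lem:gradhess} (the paper's proof is just a terser version of this, and your explicit observation that the radial direction is a null direction of $\text{Ric}_f^k$ with vanishing cross terms is exactly what makes that decomposition work). No gaps.
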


\begin{proof}
The necessary condition is an immediate consequence of the first and the fourth equalities in Lemma~\ref{lem:gradhess}. For the sufficient condition we write any vector $v\neq 0$ as the sum $v=u+w$, where $u$ is proportional to $p/|p|$ and $w$ is tangent to $\sph^n$ at $p/|p|$. The statement then follows by combining all the equalities in Lemma~\ref{lem:gradhess}. 
\end{proof}

\begin{remark}
\label{re:nobund}
The previous result implies that a $k$-homogeneous density defined in the whole space $\rrn$ cannot exist satisfying $k>0$ and $\text{Ric}^k_f\geq 0$. Otherwise, we would obtain $(\nabla^2_{\sph^n}\mu)_p<0$ for any $p\in\sph^n$, which contradicts that $\mu$ reaches its minimum. Obviously this argument does not hold for a cone $M$ with $\ptl M\neq\emptyset$, where homogeneous densities of degree $k>0$ and $\text{Ric}^k_f\geq 0$ can be found, see Examples~\ref{ex:models} and \ref{ex:monomial}. 
\end{remark}

Another characterization of the inequality $\text{Ric}^k_f\geq 0$ is given in the following lemma.

\begin{lemma}
\label{lem:matrimonio}
Let $M\subeq\rrn$ be a convex solid cone endowed with a homogeneous density $f=e^\psi$ of degree $k\neq 0$. If $k<0$ $($resp. $k>0$$)$ then the inequality $\emph{Ric}^k_f\geq 0$ on $M^*$ is equivalent to that $f^{1/k}$ is convex $($resp.~concave$)$ on $M^*$. 
\end{lemma}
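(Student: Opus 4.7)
The plan is to reduce the curvature-dimension inequality $\text{Ric}^k_f\geq 0$ to a pointwise Hessian condition on $g:=f^{1/k}=e^{\psi/k}$ via a direct computation, and then to use convexity of $M$ to translate the Hessian condition into global convexity or concavity of $g$ along line segments in $M^*$. Since $M\subseteq\rrn$ is flat, the Euclidean Ricci tensor vanishes and the formulas \eqref{eq:fricci}--\eqref{eq:fkricci} collapse to
\[
\text{Ric}^k_f(v,v)=-(\nabla^2\psi)(v,v)-\frac{1}{k}\,\escpr{\nabla\psi,v}^2
\]
for every $p\in M^*$ and $v\in T_p\rrn$.

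The heart of the argument is the computation of $\nabla^2 g$. Differentiating $g=e^{\psi/k}$ once gives $\nabla g=\frac{g}{k}\,\nabla\psi$, and a second differentiation yields
\[
\nabla^2 g(u,v)=\frac{g}{k}\left[(\nabla^2\psi)(u,v)+\frac{1}{k}\,\escpr{\nabla\psi,u}\escpr{\nabla\psi,v}\right]=-\frac{g}{k}\,\text{Ric}^k_f(u,v).
\]
Since $g>0$ on $M^*$, this identity instantly gives the equivalence at the level of pointwise signs: $\text{Ric}^k_f\geq 0$ on $M^*$ is equivalent to $\nabla^2 g\leq 0$ when $k>0$ (nonpositive Hessian), and to $\nabla^2 g\geq 0$ when $k<0$ (nonnegative Hessian).

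Finally, I would convert these Hessian inequalities into convexity/concavity of $g$ along line segments contained in $M^*$. By convexity of $M$, any segment joining $p_1,p_2\in M^*$ that avoids the origin lies in $M^*$, and along such a segment the second derivative of $t\mapsto g((1-t)p_1+tp_2)$ equals $\nabla^2 g(p_2-p_1,p_2-p_1)$, hence inherits the required sign, yielding the one-dimensional (convexity or concavity) inequality. The main content of the lemma is the algebraic identity $\nabla^2 g=-(g/k)\,\text{Ric}^k_f$; the only mild obstacle is how to treat segments through the origin (which can occur only when $M$ contains a full line). This is easily handled by extending $g$ continuously to $M$ by setting $g(0)=0$, an extension justified by \eqref{eq:caract} since $g(p)=(\eta\circ\pi)(p)^{1/k}\,|p|\to 0$ as $p\to 0$, and then checking convexity or concavity on each half-segment, where compatibility at the origin is automatic because $g$ is positive at the endpoints.
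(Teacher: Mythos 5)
Your proof is correct and takes essentially the same route as the paper: the paper's entire argument consists of the identity $\nabla^2 f^{1/k}=\frac{-1}{k}\,f^{1/k}\,\text{Ric}_f^k$, which is exactly what you compute, followed by the observation that $f^{1/k}>0$ fixes the signs. Your closing paragraph on segments through the origin is unnecessary---the lemma only asserts convexity/concavity on $M^*$, so such segments are excluded---and the claimed ``automatic'' compatibility at $0$ would in fact fail in the concave case (a positive $1$-homogeneous function restricted to a line through $0$ has a V-shaped graph, hence is convex and never concave there), so it is just as well that this case lies outside the statement.
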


\begin{proof}
A straightforward computation leads to the identity
\[
\nabla^2f^{1/k}=\frac{-1}{k}\,f^{1/k}\,\text{Ric}_f^k,
\]
from which the claim follows.
\end{proof}

\begin{example}
\label{ex:models}
From the previous lemma we deduce that a homogeneous density $f$ of degree $k\neq 0$ on a convex solid cone $M$ satisfies $\text{Ric}^k_f=0$ on $M^*$ if and only if $f=\xi^k$ for some positive linear function $\xi$ on $M$. 
\end{example}

We finish this section with a generalization for homogeneous densities in solid cones of some important equalities in $\rrn$ involving the divergence of the conformal vector field $X(p):=p$ and the behaviour of volume, area and mean curvature with respect to dilations. 

\begin{proposition}
\label{prop:keyfacts}
Let $f=e^\psi$ be a $k$-homogeneous density on a solid cone $M\subeq\rrn$. We consider the position vector field $X(p):=p$ and its associated one-parameter group of dilations $\phi_t(p):=e^tp$. Then we have: 
\begin{itemize}
\item[(i)] $\divv_f X=n+k+1$ in $M^*$,
\item[(ii)] $\divv_{\Sg,f}X=n+k$ along any smooth hypersurface $\Sg$ in $M^*$,
\item[(iii)] $V_f(\phi_t(\Om))=(e^t)^{n+k+1}\,V_f(\Om)$, for any $\Om\subeq M$,
\item[(iv)] $A_f(\phi_t(\Sg))=(e^t)^{n+k}\,A_f(\Sg)$, for any smooth hypersurface $\Sg$ in $M$,
\item[(v)] if $\Sg$ is a smooth hypersurface in $M^*$ with unit normal vector $N$, then 
\[
(H_f)_t(\phi_t(p))=e^{-t}\,H_f(p), \ \emph{ for any } p\in\Sg,
\]
where $(H_f)_t$ is the $f$-mean curvature in \eqref{eq:fmc} of the hypersurface $\phi_t(\Sg)$ with res\-pect to the unit normal $N_t$ such that $N_t(\phi_t(p))=N(p)$.
\end{itemize}
\end{proposition}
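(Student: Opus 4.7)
The plan is to verify each of the five claims largely by unpacking definitions and invoking the homogeneity identities already established in Lemma~\ref{lem:prop}; the only step that requires any geometric thought is the last one.

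For \textbf{(i)}, I would start from the definition $\divv_f X=\divv X+\escpr{\nabla\psi,X}$. For the Euclidean position vector $X(p)=p$ one has the classical $\divv X=n+1$, and Lemma~\ref{lem:prop}(ii) gives $\escpr{(\nabla\psi)(p),p}=k$ on $M^*$, so the two terms add to $n+k+1$. For \textbf{(ii)}, the same recipe works with $\divv_{\Sigma,f}X=\divv_\Sigma X+\escpr{\nabla\psi,X}$: choosing a local orthonormal frame $\{e_1,\ldots,e_n\}$ on $\Sigma$ and using that the Euclidean connection satisfies $\overline\nabla_{e_i}X=e_i$ for the position vector, one gets $\divv_\Sigma X=\sum_i\escpr{\overline\nabla_{e_i}X,e_i}=n$, and Lemma~\ref{lem:prop}(ii) again contributes $k$.

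For \textbf{(iii)} and \textbf{(iv)}, I would simply apply Lemma~\ref{lem:vap} with the dilation parameter $t$ replaced by $e^t$, since $\phi_t=h_{e^t}$ and $f$ is $k$-homogeneous.

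For \textbf{(v)}, which is the only nontrivial claim, the idea is to track separately how the Euclidean mean curvature $H$ and the term $\escpr{\nabla\psi,N}$ transform under $\phi_t$, and then combine via $H_f=nH-\escpr{\nabla\psi,N}$. Setting $q=\phi_t(p)$, the differential $(d\phi_t)_p=e^t\,\text{Id}$ sends a tangent vector $v\in T_p\Sigma$ to $e^tv\in T_q\phi_t(\Sigma)$, while unit normals are preserved, i.e.\ $N_t(q)=N(p)$. Since $N_t\circ\phi_t=N$ along $\Sigma$, differentiating yields $(\overline\nabla_{e^tv}N_t)_q=(\overline\nabla_vN)_p$, so the Weingarten endomorphism of $\phi_t(\Sigma)$ at $q$ equals $e^{-t}$ times that of $\Sigma$ at $p$, giving $H_t(\phi_t(p))=e^{-t}H(p)$. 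For the density term, Lemma~\ref{lem:prop}(i) provides $(\nabla\psi)(e^tp)=e^{-t}(\nabla\psi)(p)$, hence
\[
\escpr{(\nabla\psi)(\phi_t(p)),N_t(\phi_t(p))}=e^{-t}\escpr{(\nabla\psi)(p),N(p)}.
\]
Subtracting, $(H_f)_t(\phi_t(p))=nH_t(\phi_t(p))-\escpr{(\nabla\psi)(\phi_t(p)),N_t(\phi_t(p))}=e^{-t}H_f(p)$, as required.

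The only step with any content is \textbf{(v)}, and within it the mild obstacle is the bookkeeping that the ambient covariant derivative of $N_t$ along $\phi_t(\Sigma)$ in the rescaled direction $e^tv$ reduces to the corresponding derivative of $N$ along $\Sigma$ in direction $v$; once that identification is made, everything else is an algebraic consequence of the first identity in Lemma~\ref{lem:prop}.
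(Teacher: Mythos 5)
Your proof is correct and follows essentially the same route as the paper: (i) and (ii) by unpacking the definitions of $\divv_f$ and $\divv_{\Sg,f}$ together with Lemma~\ref{lem:prop}(ii), (iii) and (iv) via Lemma~\ref{lem:vap} with $h_{e^t}=\phi_t$, and (v) by combining the Euclidean scaling $H_t(\phi_t(p))=e^{-t}H(p)$ (which the paper simply quotes as well known, while you supply the short Weingarten-map justification) with Lemma~\ref{lem:prop}(i) in the formula $H_f=nH-\escpr{\nabla\psi,N}$.
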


\begin{proof}
Equalities (i) and (ii) are consequences of \eqref{eq:divvol}, \eqref{eq:divsup} and Lemma~\ref{lem:prop} (ii). Statements (iii) and (iv) follow from Lemma~\ref{lem:vap}. So we only have to prove (v). Let $H_t$ be the Euclidean mean curvature of $\phi_t(\Sg)$ with respect to $N_t$. It is well known that $H_t(\phi_t(p))=e^{-t}H(p)$ for any $p\in\Sg$. By using Lemma~\ref{lem:prop} (i), we get 
\begin{align*}
(H_f)_t\big(\phi_t(p)\big)&=
\big(nH_t-\escpr{\nabla\psi,N_t}\big)\big(\phi_t(p)\big)
\\
&=n\,e^{-t}\,H(p)-\escpr{(\nabla\psi)(e^tp),N(p)}
\\
&=e^{-t}\,\big(nH-\escpr{\nabla\psi,N}\big)(p)=e^{-t}\,H_f(p).
\end{align*}
This completes the proof.
\end{proof}

\section{Variational formulae. Stationary and stable hypersurfaces}
\label{sec:variational}
\setcounter{equation}{0}

In this section we gather some variational properties of those hypersurfaces in a solid cone which are first and second order minima of the weighted area functional under a constraint on the separated weighted volume.

Let $M\subeq\rrn$ be a solid cone endowed with a $k$-homogeneous density $f=e^\psi$. We denote by $\Sg$ a smooth, compact, orientable hypersurface immersed in $M$. If $\Sg$ has non-empty boundary then we assume $\ptl\Sg\sub\ptl M$. This does not exclude that $\Sg$ and $\ptl M$ may be tangent at some interior point of $\Sg$.
If $\ptl\Sg=\emptyset$ then we adopt the convention that all the integrals along $\ptl\Sg$ vanish. When $\Sg$ is embedded and separates a bounded open set $\Om$ with closure contained in the punctured cone $M^*$, then we can apply the divergence theorem in Lemma~\ref{lem:divthvol} to the position vector field $X(p):=p$. By Proposition~\ref{prop:keyfacts} (i) and the fact that $X$ is tangent to $\ptl M$, we get
\[
(n+k+1)\,V_f(\Om)=\int_\Om\divv_f X\,dv_f=-\int_\Sg\escpr{X,N}da_f,
\]
where $N$ is the inner unit normal along $\Sg$. As in \cite[Eq.~(2.2)]{bdc} we can use the previous formula to define the \emph{oriented weighted volume} of an immersed hypersurface $\Sg$ by
\begin{equation}
\label{eq:orivol}
V_f(\Sg):=\frac{-1}{n+k+1}\,\int_\Sg\escpr{X,N}da_f,
\end{equation}
where $N$ is a fixed unit normal vector along $\Sg$ and $k\neq-(n+1)$. By using the change of variables formula and the fact that the tangent hyperplane to $\Sg$ remains invariant under dilations, we can prove that
\begin{equation}
\label{eq:voldil}
V_f(h_t(\Sg))=t^{n+k+1}\,V_f(\Sg),
\end{equation}
where $h_t(p)=tp$ for any $p\in\rrn$ and any $t>0$.

\begin{remark}
\label{re:k>0}
For $k>0$, a $k$-homogeneous density $f$ is bounded near $0$ by \eqref{eq:limits}. Hence, the right side integral in \eqref{eq:orivol} is finite even if $0\in\Sg$ and so, the volume $V_f(\Sg)$ is well defined for any compact and orientable hypersurface $\Sg$ immersed in $M$. For the same reason, the weighted area $A_f(\Sg)$ is finite when $k>0$ even if $0\in\Sg$. In Examples~\ref{ex:infinite} we found that, for $k<0$, the weighted area and volume of a compact hypersurface containing $0$ may be infinite. By this reason we shall always consider $\Sg\sub M^*$ when $k<0$. 
\end{remark}

By a \emph{variation} of $\Sg$ we mean a smooth family of hypersurfaces $\Sg_t$ with $t\in (-\eps,\eps)$ given by smooth immersions $\phi_t:\Sg\to M$ such that $\Sg_0=\Sg$ and $\ptl\Sg_t\sub\ptl M$ for any $t$. The associated \emph{velocity vector} is the vector field along $\Sg$ defined by $X_p:=(d/dt)|_{t=0}\,\phi_t(p)$. Note that $X$ is tangent to $\ptl M$ in the points of $\ptl\Sg-\{0\}$.  Let $V_f(t):=V_f(\Sg_t)$ and $A_f(t):=A_f(\Sg_t)$ be the corresponding volume and area functionals. The variation is said to be \emph{volume-preserving} if $V_f(t)=V_f(0)$ for any $t\in (-\eps,\eps)$. 

In order to compute the first derivative of $V_f(t)$ and $A_f(t)$ we must take care of the fact that, if the density has degree $k\neq 0$, then $|\psi|=|\log(f)|$ tends to $+\infty$ by \eqref{eq:limits} when we approach $0$. However, if the variation $\Sg_t$ leaves invariant a small neighborhood of $0$ in $\Sg$, then we can compute $A_f'(0)$ and $V_f'(0)$ as in \cite[Lem.~3.1]{rcbm}, \cite{kathe}, to get
\begin{equation}
\label{eq:1st}
A_f'(0)=-\int_\Sg H_f\,u\,da_f-\int_{\ptl\Sg}\escpr{X,\nu}dl_f,
\qquad V_f'(0)=-\int_\Sg u\,da_f,
\end{equation}
where $H_f$ is the $f$-mean curvature defined in \eqref{eq:fmc}, $u$ is the normal component of $X$, and $\nu$ is the inner unit normal along $\ptl\Sg$ in $\Sg$.

We say that the hypersurface $\Sg$ is \emph{$f$-stationary} if $A_f'(0)=0$ for any volume-preserving variation. If $A_f'(0)=0$ \emph{for any variation} then we will say that $\Sg$ is \emph{strongly $f$-stationary}.
By using the formulae in \eqref{eq:1st} we can prove the following result, see \cite{kathe}.

\begin{lemma}
\label{lem:stationary}
If $\Sg$ is $f$-stationary $($resp. strongly $f$-stationary$)$, then we have:
\begin{itemize}
\item[(i)] $H_f$ is constant along $\Sg-\{0\}$ $($resp. $H_f=0$ along $\Sg-\{0\}$$)$,
\item[(ii)] $\Sg$ meets $\ptl M$ orthogonally in the points of 
$\ptl\Sg-\{0\}$, 
\item[(iii)] $(A_f-H_f\,V_f)'(0)=0$ for any variation of $\Sg$ supported away from $0$.
\end{itemize}
Moreover, if $0\notin\Sg$ then statements \emph{(i)} and \emph{(ii)} imply that $\Sg$ is $f$-stationary $($resp. strongly $f$-stationary$)$.
\end{lemma}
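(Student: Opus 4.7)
The plan is to exploit the first-variation formulas in \eqref{eq:1st} by testing $\Sg$ against two disjoint families of admissible variations: ones supported in the interior of $\Sg$ away from the vertex, which kill the boundary integral and expose $H_f$; and ones concentrated near $\ptl\Sg-\{0\}$, which expose the conormal data $\escpr{X,\nu}$. Since these two data sets are independent, statements (i) and (ii) can be treated separately, and (iii) and the converse follow by direct computation.

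First I would prove (i). Given $u\in C^\infty_0(\Sg-\{0\})$ with support disjoint from $\ptl\Sg$, the flow of a smooth extension of $uN$ into $M$ produces an admissible variation of $\Sg$ that leaves a neighborhood of $\ptl\Sg$ and of the vertex pointwise fixed, so \eqref{eq:1st} gives $A_f'(0)=-\int_\Sg H_f\,u\,da_f$. In the strongly $f$-stationary case this vanishes for every such $u$, forcing $H_f\equiv 0$ on $\Sg-\{0\}$ by the fundamental lemma of the calculus of variations. In the $f$-stationary case the same identity is required only when $V_f'(0)=-\int_\Sg u\,da_f=0$, so $H_f$ is $L^2(da_f)$-orthogonal to the mean-zero subspace and therefore constant on $\Sg-\{0\}$.

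For (ii) I would use boundary-concentrated variations. At each $p\in\ptl\Sg-\{0\}$ the orthogonal complement of $T_p\ptl\Sg$ inside $T_pM$ is spanned by $\{N,\nu\}$ and contains a unique (up to sign) unit vector $w_p\in T_p\ptl M$ perpendicular to $T_p\ptl\Sg$; writing $w_p=-b\,N+a\,\nu$, the orthogonality of $\Sg$ and $\ptl M$ at $p$ amounts precisely to $a(p)=0$. Given $\var\in C^\infty_0(\ptl\Sg-\{0\})$, extend $\var\, w$ to a smooth vector field on $M$ that is tangent to $\ptl M$ and vanishes near $0$; its flow defines an admissible variation. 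Thanks to Step 1, the interior term in $A_f'(0)$ reduces to $-H_f\int_\Sg u\,da_f=H_f\,V_f'(0)$, which we may kill by adding a small interior normal perturbation (available because $H_f$ is constant). Stationarity then yields $\int_{\ptl\Sg}\var\,a\,dl_f=0$ for arbitrary $\var$, whence $a\equiv 0$ on $\ptl\Sg-\{0\}$, the orthogonality statement. For (iii), once $H_f$ is constant and the orthogonality holds, any variation supported away from $0$ has $X\in T\ptl M$ on $\ptl\Sg$, so $\escpr{X,\nu}=0$ there and the boundary integral in $A_f'(0)$ vanishes; hence $(A_f-H_f\,V_f)'(0)=-\int_\Sg H_f\,u\,da_f+H_f\int_\Sg u\,da_f=0$. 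The converse follows at once: if $0\notin\Sg$ every variation is automatically supported away from the vertex, so (iii) gives $A_f'(0)=H_f\,V_f'(0)$, which vanishes on all variations when $H_f=0$ and on volume-preserving variations when $H_f$ is only constant.

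The main obstacle, as I see it, is the careful handling of the singularity at the vertex. The formulas \eqref{eq:1st} are stated only for variations leaving a neighborhood of $0$ pointwise invariant, because otherwise $|\psi|=|\log f|$ blows up when $k\neq 0$. This confines every test-function argument above to compact supports in $\Sg-\{0\}$ or $\ptl\Sg-\{0\}$, which is exactly why (i) and (ii) are stated away from the vertex and why the converse hypothesis $0\notin\Sg$ is imposed; constructing in Step 2 a boundary-tangent vector field with prescribed values $\var\,w$ along $\ptl\Sg$ is a routine but unavoidable piece of technical bookkeeping.
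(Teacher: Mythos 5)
Your proof is correct and is essentially the argument the paper intends: the paper gives no details for this lemma, deferring to the first variation formulae \eqref{eq:1st} and to \cite{kathe}, and your localization of test data (interior normal perturbations to identify $H_f$, boundary-tangent perturbations corrected by a volume-restoring interior term to identify the contact angle, and the direct computation for (iii) and the converse) is the standard way to carry this out. The only implicit ingredient worth naming is that every mean-zero $u\in C^\infty_0(\Sg-\{0\})$ is realized as the normal component of a volume-preserving variation supported away from $0$, which is the argument of \cite[Lem.~2.4]{bdc} that the paper invokes explicitly just before Lemma~\ref{lem:stable}.
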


\begin{example}[Spheres centered at $0$]
\label{ex:sphere1}
Let $\Sg$ be the intersection with the cone $M$ of a round sphere of radius $r$ centered at $0$. We consider the unit normal along $\Sg$ given by $N(p)=-p/r$. From \eqref{eq:fmc} and Lemma~\ref{lem:prop} (ii) we get $$H_f(p)=\frac{n+k}{r},$$ for any $p\in\Sg$. Clearly $\Sg$ meets orthogonally $\ptl M$ in the points of $\ptl\Sg$. It follows from Lemma~\ref{lem:stationary} that $\Sg$ is always $f$-stationary, and strongly $f$-stationary if and only if $k=-n$. 
\end{example}

\begin{example}[Spheres containing $0$]
\label{ex:spheres11}
Consider the radial homogeneous density $f(p)=|p|^k$ in $\rrn$. Let $\Sg$ be a round sphere of radius $r$ and center $p_0$ such that $0\in\Sg$. Take the unit normal $N(p)=(p_0-p)/r$. Note that $(\nabla\psi)(p)=kp/|p|^2$ for any $p\neq 0$, and so
\[
H_f(p)=\frac{n}{r}-\frac{k\escpr{p,N(p)}}{|p|^2}.
\] 
Let $p\in\Sg$ with $p\neq 0$. Putting $p=p_0-rN(p)$ and taking into account that $|p_0|=r$, we get $\escpr{p,N(p)}=\escpr{p_0,N(p)}-r$ and $|p|^2=-2r\,(\escpr{p_0,N(p)}-r)$. It follows that 
\[
H_f(p)=\frac{2n+k}{2r},
\]
for any $p\in\Sg-\{0\}$. From \eqref{eq:1st} we conclude that $\Sg$ is a critical point of $A_f$ for any volume-preserving variation supported away from $0$.
\end{example}

Suppose now that $\Sg$ is an $f$-stationary hypersurface of constant $f$-mean curvature $H_f$ away from the origin. For a variation fixing a small neighborhood of $0$ in $\Sg$, we can use computations similar to those in \cite[Prop.~3.6]{rcbm} and \cite{kathe} to obtain the second variation formula
\begin{equation}
\label{eq:2nd}
(A_f-H_f\,V_f)''(0)=\mathcal{I}_f(u,u).
\end{equation}
In the above equation $\mathcal{I}_f$ denotes the \emph{$f$-index form} of $\Sg$, i.e., the quadratic form on $C_0^\infty(\Sg-\{0\})$ defined by
\begin{equation}
\label{eq:index1}
\mathcal{I}_f(u,v):=\int_\Sg \left\{\escpr{\nabla_\Sg u,\nabla_\Sg v}-\left(\text{Ric}_f(N,N)+|\sg|^2\right)uv\right\} da_f-\int_{\ptl\Sg}\text{II}(N,N)\,uv\,dl_f,
\end{equation}
where $\text{Ric}_f$ is the $f$-Ricci tensor in \eqref{eq:fricci}, $|\sg|^2$ is the squared sum of the principal curvatures of $\Sg$, and $\text{II}$ is the Euclidean second fundamental form of $\ptl M-\{0\}$ with respect to the inner unit normal. From the integration by parts formula in Corollary~\ref{cor:ibp}, we get
\begin{equation}
\label{eq:i1i2}
\mathcal{I}_f(u,v)=\mathcal{Q}_f(u,v), \ \text{ for any } u,v\in C^\infty_0(\Sg-\{0\}),
\end{equation}
where 
\begin{align}
\label{eq:index2}
\mathcal{Q}_f(u,v):=&-\int_\Sg u\left\{\Delta_{\Sg,f}\,v+\left(\text{Ric}_f(N,N)+|\sg|^2\right) v\right\} da_f
\\
\notag
&-\int_{\ptl\Sg}u\,\left\{\frac{\ptl v}{\ptl\nu}+\text{II}(N,N)\,v\right\}dl_f.
\end{align}
Here $\Delta_{\Sg,f}$ is the $f$-Laplacian relative to $\Sg$ introduced in \eqref{eq:fsigmalaplacian}. Following the terminology in \cite{bdc} we call the second order linear operator
\begin{equation}
\label{eq:jacobi}
\mathcal{L}_fv:=\Delta_{\Sg,f}\,v+\left(\text{Ric}_f(N,N)+|\sg|^2\right)v
\end{equation}
the \emph{$f$-Jacobi operator} of $\Sg$. It was shown in \cite[Proof of Prop.~3.6]{rcbm} that this operator coincides with the derivative of the $f$-mean curvature along the variation. More precisely
\begin{equation}
\label{eq:jacobi2}
(\mathcal{L}_f u)(p)=\frac{d}{dt}\bigg|_{t=0}(H_f)_t(\phi_t(p)), 
\ \text{ for any } p\in\Sg-\{0\},
\end{equation}
where $(H_f)_t$ denotes the $f$-mean curvature along the hypersurface $\Sg_t$ given by the immersion $\phi_t:\Sg\to M$. By using that $\mathcal{Q}_f$ is symmetric we obtain the equality
\begin{equation}
\label{eq:ulfv}
\int_\Sg\big(u\,\mathcal{L}_f v-v\,\mathcal{L}_f u\big)\,da_f=\int_{\ptl\Sg}\left\{v\,\frac{\ptl u}{\ptl\nu}-u\,\frac{\ptl v}{\ptl\nu}\right\}dl_f,
\end{equation} 
for any two functions $u,v\in C_0^\infty(\Sg-\{0\})$.

Let $\Sg$ be an $f$-stationary hypersurface of constant $f$-mean curvature $H_f$. We say that $\Sg$ is \emph{strongly $f$-stable} if we have $(A_f-H_f\,V_f)''(0)\geq 0$ for any variation of $\Sg$. We say that $\Sg$ is \emph{$f$-stable} if $A_f''(0)\geq 0$ for any volume-preserving variation. Obviously a strongly $f$-stable hypersurface is $f$-stable. For a strongly $f$-stationary hypersurface the notion of strong $f$-stability is analogue to the classical stability for minimal hypersurfaces with free boundary in a domain of $\rrn$.

By the arguments in \cite[Lem.~2.4]{bdc}, any function $u\in C^\infty_0(\Sg-\{0\})$ with $\int_\Sg u\,da_f=0$ is the normal component of a volume-preserving variation of $\Sg$ supported away from $0$. Thus, we can deduce the following result from \eqref{eq:2nd} and \eqref{eq:i1i2}.

\begin{lemma}
\label{lem:stable}
Let $\Sg$ be an $f$-stationary hypersurface with index form $\mathcal{Q}_f$ defined in \eqref{eq:index2}.
\begin{itemize} 
\item[(i)] If $\Sg$ is strongly $f$-stable then $\mathcal{Q}_f(u,u)\geq 0$ for any $u\in C^\infty_0(\Sg-\{0\})$.
\item[(ii)] If $\Sg$ is $f$-stable then $\mathcal{Q}_f(u,u)\geq 0$ for any $u\in C^\infty_0(\Sg-\{0\})$ with $\int_\Sg u\,da_f=0$. 
\end{itemize}
Moreover, if $0\notin\Sg$ then the reverse statements also hold.
\end{lemma}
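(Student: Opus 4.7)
The plan is to deduce everything from the second variation identity already established, $(A_f-H_f V_f)''(0)=\mathcal{I}_f(u,u)=\mathcal{Q}_f(u,u)$, valid for any variation of $\Sg$ supported away from $0$ with normal component $u\in C^\infty_0(\Sg-\{0\})$. The key point is that the right-hand side depends only on $u$ and not on the particular variation realizing it, so the four implications reduce to producing admissible variations with prescribed normal component and to handling the volume constraint.

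For the forward direction of (i), given any $u\in C^\infty_0(\Sg-\{0\})$ I would first build a variation of $\Sg$ whose velocity vector field has normal component $u$ and is supported away from $0$. Since $\Sg$ is $f$-stationary, Lemma~\ref{lem:stationary}~(ii) ensures that $N$ is tangent to $\ptl M$ along $\ptl\Sg-\{0\}$, so $uN$ itself is tangent to $\ptl M$ there; extending it to a compactly supported smooth vector field on $M$ tangent to $\ptl M$ and flowing produces an admissible variation. Applying strong $f$-stability together with \eqref{eq:2nd} and \eqref{eq:i1i2} yields $\mathcal{Q}_f(u,u)=(A_f-H_fV_f)''(0)\geq 0$. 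For the forward direction of (ii), given a mean-zero $u$ I would invoke the adaptation to the weighted setting of \cite[Lem.~2.4]{bdc}, cited just before the statement, to produce a \emph{volume-preserving} variation with normal component $u$ and supported away from $0$; since $V_f(t)$ is then constant, the $f$-stability hypothesis together with \eqref{eq:2nd}--\eqref{eq:i1i2} gives $\mathcal{Q}_f(u,u)=A_f''(0)\geq 0$.

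For the converses, under the extra assumption $0\notin\Sg$ the same identities apply to arbitrary admissible variations (there is no neighborhood of the origin to exclude), and the normal component of any such variation automatically lies in $C^\infty_0(\Sg)$. Thus if $\mathcal{Q}_f(u,u)\geq 0$ for every $u\in C^\infty_0(\Sg)$, then $(A_f-H_fV_f)''(0)\geq 0$ for every variation, yielding strong $f$-stability. Similarly, $f$-stability follows because any volume-preserving variation has $V_f'(0)=-\int_\Sg u\,da_f=0$ by \eqref{eq:1st}, so $u$ is mean-zero, $V_f(t)$ is constant, and hence $A_f''(0)=\mathcal{Q}_f(u,u)\geq 0$.

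The main technical obstacle is the forward direction of (ii), i.e., constructing a volume-preserving variation with an arbitrary prescribed mean-zero normal component $u\in C^\infty_0(\Sg-\{0\})$. The argument parallels \cite[Lem.~2.4]{bdc}: first build a variation whose normal component is $u$ (tangent to $\ptl M$ along $\ptl\Sg$ as above), then reparametrize by a second auxiliary flow and use the implicit function theorem to readjust so that the weighted volume stays constant to all orders. One must verify this classical construction carries over with the density $f$ present, but the only properties of $f$ needed are smoothness and positivity on the supports involved, both of which hold because those supports are bounded away from the origin where $f$ may blow up.
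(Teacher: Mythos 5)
Your proposal is correct and follows essentially the same route as the paper, which deduces the lemma directly from the second variation formula \eqref{eq:2nd}, the identity \eqref{eq:i1i2}, and the adaptation of \cite[Lem.~2.4]{bdc} producing a volume-preserving variation supported away from the origin with prescribed mean-zero normal component. Your additional remarks on admissibility of the variations along $\ptl M$ and on the converse directions when $0\notin\Sg$ are consistent with what the paper leaves implicit.
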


\begin{remark}
\label{re:pipote}
Note that, if the cone $M$ is convex and the $f$-Ricci tensor defined in \eqref{eq:fricci} satisfies $\text{Ric}_f\geq 0$, then two terms in the expression of $\mathcal{Q}_f(u,u)$ in \eqref{eq:index2} are nonpositive and so, the stability condition in the previous lemma becomes more restrictive. Hence, convexity of the cone and  nonnegativity of $\text{Ric}_f$ are natural hypotheses to prove classification results for $f$-stable hypesurfaces. 
\end{remark}

\begin{example}[Spheres centered at $0$]
\label{ex:sphere2}
Let $\Sg$ be the intersection with the cone $M$ of a round sphere of radius $r$ centered at $0$. We consider the unit normal $N(p)=-p/r$. Note that $|\sg|^2=n/r^2$ in $\Sg$ and $\text{II}(N,N)=0$ along $\ptl\Sg$ since the inner unit normal to $\ptl M-\{0\}$ is constant along any open segment leaving from $0$. On the other hand, by \eqref{eq:fricci} and Lemma~\ref{lem:prop} (iii), we get $\text{Ric}_f(N,N)=k/r^2$. All this together gives
\[
\mathcal{Q}_f(u,u)=\mathcal{I}_f(u,u)=\int_\Sg\big\{|\nabla_\Sg u|^2-\frac{n+k}{r^2}\,u^2\big\}\,da_f.
\]
It follows from Lemma~\ref{lem:stable} that $\Sg$ is strongly $f$-stable if and only if $k\leq -n$. In the particular case of the homogeneous radial density $f(p)=|p|^k$ in $\rrn$, we deduce from \cite[Thm.~3.10]{rcbm} that $\Sg$ is $f$-stable if and only if $k\leq 0$. If $k>0$, $M$ is convex, and the $k$-dimensional Bakry-\'Emery-Ricci tensor in \eqref{eq:fkricci} satisfies $\text{Ric}_f^k\geq 0$, then any spherical cap $\Sg$ is an $f$-isoperimetric solution, i.e., minimizes the weighted area among all hypersurfaces in the cone separating a fixed weighted volume, as it is proved in \cite{cabre3} and \cite{homoiso}. In particular, $\Sg$ is $f$-stable, see also \cite[Re.~1.5]{cabre3}.
\end{example}

\section{Classification of compact $f$-stable hypersurfaces}
\label{sec:main}
\setcounter{equation}{0}

In this section we provide sufficient conditions on a solid cone with a homogeneous density $f$ to ensure that the unique compact $f$-stable hypersurfaces are intersections with the cone of round spheres centered at the vertex. In order to get some information from the stability condition we must use the inequality in Lemma~\ref{lem:stable} (ii) with a suitable test function. This will be done by following the arguments employed in \cite{bdc} to show that the round spheres are the unique compact, orientable, stable hypersurfaces immersed in $\rrn$ with the constant density $f=1$. 

We first prove an integral formula, which gives us the test function that will be inserted in the index form \eqref{eq:index2} to establish our main result.

\begin{proposition}[Minkowski formula]
\label{prop:mink}
Let $M\subeq\rrn$ be a solid cone endowed with a $k$-homogeneous density $f=e^\psi$. Consider a smooth, compact, orientable hypersurface $\Sg$ immersed in $M^*$ with $\ptl\Sg\sub\ptl M$ and such that $\Sg$ meets $\ptl M$ orthogonally in the points of $\ptl\Sg$. Let $N$ be a unit normal along the hypersurface, $H_f$ the $f$-mean curvature defined in \eqref{eq:fmc}, and $X(p):=p$ the position vector field in $\rrn$. Then we have
\begin{equation}
\label{eq:mink1}
\int_\Sg (n+k+H_f\escpr{X,N})\,da_f=0.
\end{equation}
If, in addition, $k\neq -(n+1)$ and $H_f$ is constant along $\Sg$, then
\begin{equation}
\label{eq:mink2}
(n+k)\,A_f(\Sg)=(n+k+1)\,H_f\,V_f(\Sg),
\end{equation}
where $A_f(\Sg)$ is the weighted area and $V_f(\Sg)$ in the oriented weighted volume. 
\end{proposition}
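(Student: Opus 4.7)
The strategy is a direct application of the weighted divergence theorem for hypersurfaces (Lemma~\ref{lem:divthsup}) to the position vector field $X(p):=p$, combined with the computation of $\divv_{\Sg,f}X$ from Proposition~\ref{prop:keyfacts}(ii).

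First, I would substitute $X(p)=p$ into Lemma~\ref{lem:divthsup}, which yields
\[
\int_\Sg \divv_{\Sg,f}X\,da_f = -\int_\Sg H_f\escpr{X,N}\,da_f - \int_{\ptl\Sg}\escpr{X,\nu}\,dl_f.
\]
By Proposition~\ref{prop:keyfacts}(ii), $\divv_{\Sg,f}X = n+k$ on $\Sg$. So the formula \eqref{eq:mink1} will follow once I show that the boundary integral $\int_{\ptl\Sg}\escpr{X,\nu}\,dl_f$ vanishes.

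The key observation for the vanishing of the boundary term is the cone structure of $M$ combined with the free-boundary (orthogonality) condition. Fix $p\in\ptl\Sg\subset\ptl M$. Since $M$ is a cone with vertex at $0$, the ray from $0$ through $p$ lies in $\ptl M$, so $X(p)=p\in T_p\ptl M$. Under the orthogonality assumption, $N(p)\in T_p\ptl M$, and moreover $T_p\ptl M$ decomposes as the orthogonal sum $T_p\ptl\Sg\oplus\mathrm{span}(N(p))$, since $T_p\ptl\Sg=T_p\Sg\cap T_p\ptl M$ and $N\in T_p\Sg\cap T_p\ptl M$. Therefore $p$ lies in this subspace. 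As $\nu$ is by definition orthogonal to $T_p\ptl\Sg$ inside $T_p\Sg$ and $\nu\perp N$ (both are in $T_p\Sg$, with $N\notin T_p\Sg$ contradicting... wait: $\nu\in T_p\Sg$ and $N\perp T_p\Sg$, so trivially $\nu\perp N$), we conclude $\escpr{X(p),\nu(p)}=0$. This forces the boundary integral to vanish, proving \eqref{eq:mink1}.

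For \eqref{eq:mink2}, under the additional hypothesis that $H_f$ is constant, \eqref{eq:mink1} reads
\[
(n+k)\,A_f(\Sg) + H_f\int_\Sg\escpr{X,N}\,da_f = 0.
\]
Using the definition \eqref{eq:orivol} of the oriented weighted volume (valid since $k\neq -(n+1)$), we have $\int_\Sg\escpr{X,N}\,da_f=-(n+k+1)\,V_f(\Sg)$, and substituting gives $(n+k)\,A_f(\Sg)=(n+k+1)\,H_f\,V_f(\Sg)$, as required.

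The only subtle point is the vanishing of the boundary term, which encodes both the conical geometry of $M$ and the free-boundary condition; everything else is a direct application of results already established in the paper.
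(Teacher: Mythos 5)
Your proof is correct and follows essentially the same route as the paper: apply Lemma~\ref{lem:divthsup} to $X(p)=p$, use $\divv_{\Sg,f}X=n+k$ from Proposition~\ref{prop:keyfacts}(ii), kill the boundary term via the free-boundary condition, and then invoke \eqref{eq:orivol}. Your somewhat laboured decomposition of $T_p\ptl M$ (note the slip ``$N\in T_p\Sg\cap T_p\ptl M$'', which should read $N\in T_p\ptl M$ with $N\perp T_p\Sg$) amounts to the paper's one-line observation that orthogonality forces $\nu$ to coincide with the inner unit normal of $\ptl M$, to which $X$ is tangent.
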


\begin{proof}
We apply the divergence theorem in Lemma~\ref{lem:divthsup} together with equality $\divv_{\Sg,f}X=n+k$ in Proposition~\ref{prop:keyfacts} (ii). We get
\[
\int_\Sg (n+k)\,da_f=-\int_\Sg H_f\escpr{X,N}da_f-\int_{\ptl\Sg}\escpr{X,\nu}dl_f,
\]
where $\nu$ is the inner unit normal along $\ptl\Sg$ in $\Sg$. The orthogonality condition between $\Sg$ and $\ptl M$ implies that $\nu$ coincides with the inner unit normal to $\ptl M$ along $\ptl\Sg$. Hence, formula \eqref{eq:mink1} follows from the previous equality by using that $X$ is tangent to $\ptl M-\{0\}$. Equality \eqref{eq:mink2} is a consequence of \eqref{eq:mink1} and the definition of $V_f(\Sg)$ in \eqref{eq:orivol}.
\end{proof}

\begin{remark}
Another proof of \eqref{eq:mink1} follows by applying the divergence theorem in Lemma~\ref{lem:divthsup} to the vector field $X^\top:=X-\escpr{X,N}N$ with $X(p)=p$, and
taking into account that $\text{div}_{\Sg,f}X^\top=n+k+H_f\escpr{X,N}$.
\end{remark}

\begin{remarks}
\label{re:minkowski}
1. Suppose that $\Sg$ is embedded, $f$-stationary, and separates a bounded open set $\Om$ with $\overline{\Om}\sub M^*$. In this case, the weighted volume $V_f(\Om)$ is well defined even in the case $k=-(n+1)$. By taking the inner unit normal $N$ along $\Sg$, formula \eqref{eq:mink2} reads
\[
(n+k)\,A_f(\Sg)=(n+k+1)\,H_f\,V_f(\Om).
\]
Thus, a compact hypersurface in the previous conditions cannot exist if $k=-(n+1)$. 

2. As a consequence of \eqref{eq:mink2} it follows that $H_f\neq 0$ provided $k\neq-n$.

3. In Example~\ref{ex:sphere1} we showed that, if $k=-n$, then the intersection $\Sg$ of $M$ with any round sphere centered at $0$ is strongly $f$-stationary. In fact, equation \eqref{eq:mink2} implies that any $f$-stationary hypersurface $\Sg$ immersed in $M^*$ satisfies $H_f=0$ when $k=-n$ and so, $\Sg$ is strongly $f$-stationary by Lemma~\ref{lem:stationary}. This property does not hold if $0\in\Sg$, as it is illustrated in Example~\ref{ex:spheres11}.

4. For the constant density $f=1$ in $\rrn$ we deduce the known Minkowski identity $A(\Sg)=(n+1)H\,V(\Om)$ relating Euclidean area, volume and mean curvature \cite{hsiung}.
\end{remarks}

For the case of the constant density $f=1$ in $\rrn$ formula $\eqref{eq:mink1}$ says that the function $u=1+H\escpr{X,N}$ has mean zero with respect to the Euclidean element of area. This is the test function used in \cite{bdc} and later interpreted in \cite{wente} as the normal component of the variation of $\Sg$ obtained by equidistant hypersurfaces dilated to keep the enclosed volume constant. In the next result we show that the function $u=n+k+H_f\escpr{X,N}$ in \eqref{eq:mink1} admits the same interpretation.

\begin{lemma}
\label{lem:inter}
Let $M\subeq\rrn$ be a solid cone endowed with a homogeneous density $f=e^\psi$ of degree $k\neq -(n+1)$ and $k\neq -n$. Consider a smooth, compact, orientable and $f$-stationary hypersurface $\Sg$ immersed in $M^*$ with $\ptl\Sg\sub\ptl M$. Let $N$ be a unit normal along $\Sg$ and define the variation $\varphi_t(p):=p+t(n+k)N(p)$. For any $t$ small enough, let $s(t)$ be the positive number such that $V_f(s(t)\,\varphi_t(\Sg))=V_f(\Sg)$. Then, the normal component of the variation 
\[
\phi_t(p):=s(t)\,\varphi_t(p)=s(t)\,(p+t(n+k)\,N(p))
\]
equals $n+k+H_f\escpr{X,N}$, where $H_f$ is the $f$-mean curvature defined in \eqref{eq:fmc}, and $X(p):=p$ is the position vector field in $\rrn$.
\end{lemma}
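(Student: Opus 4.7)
My plan is to reduce the statement to a direct computation of $(d/dt)|_{t=0}\,\phi_t(p)$, separating the contributions of the dilation factor $s(t)$ and the equidistant deformation $\varphi_t$. Since $\varphi_0=\text{id}_\Sg$ forces $s(0)=1$, the product rule gives
\[
\frac{d}{dt}\bigg|_{t=0}\phi_t(p)=s'(0)\,p+(n+k)\,N(p)=s'(0)\,X(p)+(n+k)\,N(p),
\]
and taking inner product with $N(p)$ reduces the claim to showing that $s'(0)=H_f$. So the bulk of the argument is identifying the derivative of the rescaling factor that restores the weighted volume.

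To compute $s'(0)$, I would first invoke the homogeneity relation \eqref{eq:voldil} to write the constraint $V_f(\phi_t(\Sg))=V_f(\Sg)$ as
\[
s(t)^{n+k+1}\,V_f(\varphi_t(\Sg))=V_f(\Sg).
\]
Here the hypotheses $k\neq-(n+1)$ and $V_f(\Sg)\neq 0$ (which follows from \eqref{eq:mink2} together with $k\neq-n$ and $A_f(\Sg)>0$, and also guarantees smooth dependence of $s(t)$ via the implicit function theorem) ensure that the equation is nondegenerate. Differentiating at $t=0$ and using $s(0)=1$ yields
\[
(n+k+1)\,s'(0)\,V_f(\Sg)+\frac{d}{dt}\bigg|_{t=0}V_f(\varphi_t(\Sg))=0.
\]
The velocity of the variation $\varphi_t$ is the vector field $(n+k)N$ along $\Sg$, whose normal component is the constant $n+k$, so the first variation formula \eqref{eq:1st} gives $(d/dt)|_{t=0}\,V_f(\varphi_t(\Sg))=-(n+k)\,A_f(\Sg)$. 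Substituting, I obtain
\[
s'(0)=\frac{(n+k)\,A_f(\Sg)}{(n+k+1)\,V_f(\Sg)}.
\]

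Finally, since $\Sg$ is $f$-stationary and lies in $M^*$ meeting $\ptl M$ orthogonally, Lemma~\ref{lem:stationary} yields constancy of $H_f$ and the Minkowski formula \eqref{eq:mink2} in Proposition~\ref{prop:mink} applies, giving $(n+k)\,A_f(\Sg)=(n+k+1)\,H_f\,V_f(\Sg)$. Combining with the previous display produces $s'(0)=H_f$, and plugging back into the expression for the normal component of the velocity yields $n+k+H_f\escpr{X,N}$, as desired. I do not anticipate a genuine obstacle here: the only delicate point is ensuring $s(t)$ is well defined and smooth near $t=0$, which is taken care of by the nonvanishing of $V_f(\Sg)$ together with $k\neq -(n+1)$, and the rest is bookkeeping that assembles \eqref{eq:voldil}, \eqref{eq:1st} and \eqref{eq:mink2}.
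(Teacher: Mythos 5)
Your proposal is correct and follows essentially the same route as the paper: both reduce the claim to $s'(0)=H_f$, obtain $V_f'(0)=-(n+k)\,A_f(\Sg)$ from the first variation formula \eqref{eq:1st} applied to the parallel deformation, and conclude via the homogeneity relation \eqref{eq:voldil} and the Minkowski identity \eqref{eq:mink2}. The only cosmetic difference is that you differentiate the implicit constraint $s(t)^{n+k+1}V_f(\varphi_t(\Sg))=V_f(\Sg)$ directly, whereas the paper first solves explicitly for $s(t)=(V_f(\Sg)/V_f(t))^{1/(n+k+1)}$ before differentiating.
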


\begin{remark}
Note that, up to the constant factor $n+k$, the variation $\varphi_t$ provides the classical deformation of $\Sg$ by parallel hypersurfaces.
\end{remark}

\begin{proof}[Proof of Lemma~\ref{lem:inter}]
From Lemma~\ref{lem:stationary} we know that $H_f$ is constant and $\Sg$ meets $\ptl M$ orthogonally in the points of $\ptl\Sg$. Let $V_f(t):=V_f(\Sg_t)$, where $\Sg_t:=\varphi_t(\Sg)$. The velocity vector field of the variation $\varphi_t$ equals $(n+k)\,N$ along $\Sg$. By using \eqref{eq:1st} and \eqref{eq:mink2} we get
\[
V_f'(0)=-(n+k)\,A_f(\Sg)=-(n+k+1)\,H_f\,V_f(\Sg).
\] 
Thus $V_f'(0)\neq 0$ and we can find $\eps>0$ such that $V_f(t)$ has the same sign as $V_f(0)=V_f(\Sg)$ for any $t\in (-\eps,\eps)$. On the other hand, equation \eqref{eq:voldil} implies that
\[
V_f(\Sg)=V_f(s(t)\,\Sg_t)=s(t)^{n+k+1}\,V_f(t),
\] 
from which $s(t)=(V_f(\Sg)/V_f(t))^{1/(n+k+1)}$. Hence the function $s:(-\eps,\eps)\to\rr$ is smooth and positive with $s(0)=1$. By differentiating at $t=0$ in the previous equality, and taking into account the computation of $V_f'(0)$ above, we have
\[
0=(n+k+1)\,V_f(\Sg)\,(s'(0)-H_f),
\]
so that $s'(0)=H_f$. Therefore the velocity vector associated to $\phi_t(p)=s(t)\,\varphi_t(p)$ equals
\[
\frac{d}{dt}\bigg|_{t=0}\phi_t(p)=H_f\,X(p)+(n+k)\,N(p),
\]
and the claim follows.
\end{proof}

As pointed out in Remark~\ref{re:pipote}, in order to obtain classification results for $f$-stable hypersurfaces it is natural to assume convexity of the cone and a non-negative lower bound on the $f$-Ricci tensor defined in \eqref{eq:fricci}: in this way some terms in the index form \eqref{eq:index2} are nonpositive and the stability inequality in Lemma~\ref{lem:stable} (ii) becomes more restrictive. In fact, we are now ready to prove the following result for $f$-stable hypersurfaces in the punctured cone $M^*$.

\begin{theorem}
\label{th:main}
Let $M\subeq\rrn$ be a convex solid cone endowed with a  homogeneous density $f=e^\psi$ of degree $k\neq -(n+1)$. Suppose that $k<-n$ or $k>0$, and that the $k$-dimensional Bakry-\'Emery-Ricci tensor in \eqref{eq:fkricci} satisfies $\emph{Ric}_f^k\geq 0$. Let $\Sg$ be a smooth, compact, orientable hypersurface immersed in $M^*$ with $\ptl\Sg\sub\ptl M$. If $\Sg$ is $f$-stable, then $\Sg$ is the intersection with $M$ of a round sphere centered at the vertex.
\end{theorem}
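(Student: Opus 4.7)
The plan is to adapt the Barbosa--do Carmo stability argument to the weighted setting, using as test function the normal component of the volume-preserving variation identified in Lemma~\ref{lem:inter}:
\[
u := (n+k) + H_f\,\escpr{X,N},
\]
where $H_f$ is constant and $\Sg$ meets $\ptl M$ orthogonally by $f$-stationarity (Lemma~\ref{lem:stationary}). The Minkowski identity \eqref{eq:mink1} gives $\int_\Sg u\,da_f=0$, so Lemma~\ref{lem:stable}(ii) yields $\mathcal{Q}_f(u,u)\geq 0$. The strategy is to prove the reverse inequality $\mathcal{Q}_f(u,u)\leq 0$ and extract rigidity from equality.

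First I would compute the Jacobi operator on the pieces of $u$. From \eqref{eq:jacobi}, $\mathcal{L}_f(1)=\text{Ric}_f(N,N)+|\sg|^2$. For $v:=\escpr{X,N}$, the dilations $\phi_t(p)=e^tp$ produce a variation of $\Sg$ with normal component $v$, and Proposition~\ref{prop:keyfacts}(v) gives $(H_f)_t\circ\phi_t=e^{-t}H_f$; differentiating in \eqref{eq:jacobi2} yields $\mathcal{L}_f(v)=-H_f$. I also need the boundary behaviour of $v$. Since $\ptl M$ is a cone, $X$ is tangent to $\ptl M$ and its Euclidean second fundamental form satisfies $\text{II}(X,\cdot)=0$; combined with the orthogonality (which identifies $\nu$ with the inner unit normal of $\ptl M$ along $\ptl\Sg$ and places $X^\top\in T(\ptl\Sg)$), a direct calculation produces
\[
\frac{\ptl v}{\ptl\nu}=-\text{II}(N,N)\,v \quad\text{on } \ptl\Sg,
\]
and as a consequence the boundary contributions to $\mathcal{Q}_f(1,v)$ and $\mathcal{Q}_f(v,v)$ cancel.

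Expanding $\mathcal{Q}_f(u,u)=(n+k)^2\,\mathcal{Q}_f(1,1)+2(n+k)H_f\,\mathcal{Q}_f(1,v)+H_f^2\,\mathcal{Q}_f(v,v)$ with the above data, and using the zero-mean relation $H_f\int_\Sg v\,da_f=-(n+k)A_f(\Sg)$ (valid since $k\neq -n$ forces $H_f\neq 0$ by Remarks~\ref{re:minkowski}(2)), the expression should collapse to
\[
\mathcal{Q}_f(u,u)=-(n+k)\left\{(n+k)\!\!\int_\Sg\!(\text{Ric}_f(N,N)+|\sg|^2)\,da_f+(n+k)\!\!\int_{\ptl\Sg}\!\text{II}(N,N)\,dl_f-H_f^2 A_f(\Sg)\right\}.
\]
The crux is then a pointwise inequality: combining $|\sg|^2\geq nH^2$ (equality iff $\Sg$ is umbilical), the bound $\text{Ric}_f(N,N)\geq k^{-1}\escpr{\nabla\psi,N}^2$ extracted from $\text{Ric}_f^k\geq 0$, and $\escpr{\nabla\psi,N}=nH-H_f$ from \eqref{eq:fmc}, straightforward algebra gives
\[
\text{Ric}_f(N,N)+|\sg|^2-\frac{H_f^2}{n+k}\ \geq\ \frac{n\bigl((n+k)H-H_f\bigr)^2}{k(n+k)}.
\]
The right side is nonnegative exactly when $k(n+k)>0$, which is precisely the dichotomy $k<-n$ or $k>0$ assumed in the theorem. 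Feeding this bound into the formula above, and using convexity $\text{II}(N,N)\geq 0$ for the boundary, a sign check in each of the two regimes yields $\mathcal{Q}_f(u,u)\leq 0$.

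Combined with $f$-stability, $\mathcal{Q}_f(u,u)=0$ and all the inequalities saturate: $\Sg$ is totally umbilical ($|\sg|^2=nH^2$), $(n+k)H=H_f$, and the boundary integrand vanishes. Umbilicity in $\rrn$ together with $H\neq 0$ (from $H_f\neq 0$ and $n+k\neq 0$) places $\Sg$ inside a round sphere; the surviving identity $\escpr{\nabla\psi,N}=-kH$ then pins the centre to the vertex by the auxiliary Lemma~\ref{lem:umbilical}. The most delicate step will be the boundary identity $\ptl v/\ptl\nu=-\text{II}(N,N)\,v$, which relies critically on the cone structure of $\ptl M$ (giving $\text{II}(X,\cdot)=0$) together with the free-boundary condition. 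A secondary subtlety is the sharpness of the degree dichotomy, since the key pointwise inequality genuinely reverses in the intermediate range $-n<k<0$, consistent with its exclusion from the hypothesis.
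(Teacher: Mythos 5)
Your proposal is correct and follows essentially the same route as the paper: the test function $u=n+k+H_f\escpr{X,N}$ with zero mean via the Minkowski formula \eqref{eq:mink1}, the identity $\mathcal{L}_f\escpr{X,N}=-H_f$ obtained from dilations and \eqref{eq:jacobi2}, the boundary relation $\ptl\escpr{X,N}/\ptl\nu=-\text{II}(N,N)\escpr{X,N}$, the resulting formula $\mathcal{Q}_f(u,u)=-(n+k)^2\bigl[\int_\Sg(\text{Ric}_f(N,N)+|\sg|^2-H_f^2/(n+k))\,da_f+\int_{\ptl\Sg}\text{II}(N,N)\,dl_f\bigr]$, and the pointwise rigidity inequality whose nonnegativity hinges on $k(n+k)>0$ (Lemma~\ref{lem:umbilical}). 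The only cosmetic difference is that you expand $\mathcal{Q}_f$ bilinearly in $1$ and $\escpr{X,N}$ rather than computing $\int_\Sg u\,\mathcal{L}_f u\,da_f$ directly, which yields the same expression.
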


\begin{remark}
The hypothesis $k\neq-(n+1)$ is quite natural. Indeed, it follows from \eqref{eq:orivol} and Remark~\ref{re:minkowski} that, for $k=-(n+1)$, there are neither immersed nor embedded compact $f$-stationary hypersurfaces contained in the punctured cone $M^*$.
\end{remark}

\begin{proof}[Proof of Theorem~\ref{th:main}]
Let $N$ be a unit normal vector along $\Sg$. From Lemma~\ref{lem:stationary} we know that the $f$-mean curvature $H_f$ of $\Sg$ is constant and $\Sg$ meets orthogonally $\ptl M$ in the points of $\ptl\Sg$. Let $X(p):=p$ be the position vector field in $\rrn$ and $g:=\escpr{X,N}$ the support function of $\Sg$. By \eqref{eq:mink1}, the function $u:=n+k+H_fg$ has mean zero with respect to the weighted element of area. As $\Sg$ is $f$-stable, we deduce from Lemma~\ref{lem:stable} (ii) that the $f$-index form introduced in \eqref{eq:index2} satisfies the inequality $\mathcal{Q}_f(u,u)\geq 0$, that is
\begin{equation}
\label{eq:main2}
0\leq -\int_\Sg u\,\mathcal{L}_f u\,da_f-\int_{\ptl\Sg}u\left\{\frac{\ptl u}{\ptl\nu}+\text{II}(N,N)\,u\right\}dl_f,
\end{equation}
where $\mathcal{L}_f$ is the $f$-Jacobi operator in \eqref{eq:jacobi}, $\nu$ is the inner unit normal along $\ptl\Sg$ in $\Sg$, and $\text{II}$ is the second fundamental form of $\ptl M-\{0\}$ with respect to the inner unit normal. 

Now, we compute the two integrals in \eqref{eq:main2}. By using the linearity of $\mathcal{L}_f$ we get
\begin{equation}
\label{eq:main3}
\int_\Sg u\,\mathcal{L}_f u\,da_f=(n+k)\int_\Sg u\,\mathcal{L}_f (1)\,da_f+H_f\int_\Sg u\,\mathcal{L}_f g\,da_f.
\end{equation}
Consider the one-parameter group $\phi_t(p):=e^tp$ of dilations associated to $X$, and the induced variation $\Sg_t:=\phi_t(\Sg)$ of $\Sg$. By equation \eqref{eq:jacobi2} and Proposition~\ref{prop:keyfacts} (v), we obtain 
\begin{equation}
\label{eq:lfg}
(\mathcal{L}_f g)(p)=\frac{d}{dt}\bigg|_{t=0}(H_f)_t(\phi_t(p))=
\frac{d}{dt}\bigg|_{t=0}e^{-t}\,H_f(p)=-H_f.
\end{equation}
Equalities \eqref{eq:lfg} and \eqref{eq:mink1} imply that the second integral at the right side of \eqref{eq:main3} vanishes. Hence, we have
\begin{align}
\label{eq:main4}
\int_\Sg u\,\mathcal{L}_f u\,da_f&=(n+k)^2\int_\Sg\mathcal{L}_f(1)\,da_f
+(n+k)\,H_f\int_\Sg g\,\mathcal{L}_f(1)\,da_f
\\
\notag
&=(n+k)^2\int_\Sg\mathcal{L}_f(1)\,da_f+(n+k)\,H_f\int_\Sg \mathcal{L}_fg\,da_f+(n+k)\,H_f\int_{\ptl\Sg}\frac{\ptl g}{\ptl\nu}\,dl_f
\\
\notag
&=(n+k)^2\int_\Sg\bigg\{\mathcal{L}_f(1)-\frac{H^2_f}{n+k}\bigg\}\,da_f-(n+k)\,H_f\int_{\ptl\Sg}\text{II}(N,N)\,g\,dl_f.
\end{align}
To get the second equality we have taken into account \eqref{eq:ulfv}. For the third one, we have used \eqref{eq:lfg} together with the identity $\ptl g/\ptl\nu=-\text{II}(N,N)\,g$, which was proved in \cite[Lem.~4.8]{cones}. Again from this identity, we deduce
\[
\frac{\ptl u}{\ptl\nu}+\text{II}(N,N)\,u=(n+k)\,\text{II}(N,N),
\]
so that the boundary integral in \eqref{eq:main2} equals
\begin{equation}
\label{eq:main5}
(n+k)^2\int_{\ptl\Sg}\text{II}(N,N)\,dl_f+(n+k)\,H_f\int_{\ptl\Sg}\text{II}(N,N)\,g\,dl_f.
\end{equation}

By substituting \eqref{eq:main4} and \eqref{eq:main5} into \eqref{eq:main2}, we conclude that
\begin{align*}
0&\leq -(n+k)^2\left[\int_\Sg\bigg\{\mathcal{L}_f(1)-\frac{H^2_f}{n+k}\bigg\}\,da_f+\int_{\ptl\Sg}\text{II}(N,N)\,dl_f
\right]
\\
&=-(n+k)^2\left[\int_\Sg\bigg\{\text{Ric}_f(N,N)+|\sg|^2-\frac{H^2_f}{n+k}\bigg\}\,da_f+\int_{\ptl\Sg}\text{II}(N,N)\,dl_f
\right],
\end{align*}
where $\text{Ric}_f$ is the $f$-Ricci tensor in \eqref{eq:fricci} and $|\sg|^2$ is the squared sum of the principal curvatures of $\Sg$. The proof finishes by using the convexity of $M$ and Lemma~\ref{lem:umbilical} below.
\end{proof}

\begin{lemma}
\label{lem:umbilical}
Let $M\subeq\rrn$ be a solid cone endowed with a $k$-homoge\-neous density $f=e^\psi$ with $k<-n$ or $k>0$, and such that $\emph{Ric}_f^k\geq 0$. If $\Sg$ is a smooth,  orientable hypersurface immersed in $M^*$, then we have
\begin{equation}
\label{eq:sign}
\emph{Ric}_f(N,N)+|\sg|^2-\frac{H^2_f}{n+k}\geq 0.
\end{equation}
Moreover, if $\Sg$ is compact with $\ptl\Sg\sub\ptl M$ and $f$-stationary, then equality holds in $\Sg$ if and only if $\Sg$ is the intersection with $M$ of a round sphere centered at the vertex.
\end{lemma}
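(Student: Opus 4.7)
The plan for the inequality itself is to combine the two natural lower bounds on the two positive terms. Writing $a:=nH$ and $b:=\langle\nabla\psi,N\rangle$, Cauchy--Schwarz on the principal curvatures gives $|\sigma|^2\geq nH^2=a^2/n$, with equality iff $\Sigma$ is umbilical, while $\emph{Ric}_f^k\geq 0$ translates into $\emph{Ric}_f(N,N)\geq b^2/k$, with equality iff $\emph{Ric}_f^k(N,N)=0$. Since $H_f=a-b$ by \eqref{eq:fmc}, the key step is the purely algebraic identity
\[
\frac{a^2}{n}+\frac{b^2}{k}-\frac{(a-b)^2}{n+k}=\frac{(ka+nb)^2}{nk(n+k)},
\]
together with the observation that $nk(n+k)>0$ both for $k>0$ and for $k<-n$. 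Chaining the three estimates then expresses $\emph{Ric}_f(N,N)+|\sigma|^2-H_f^2/(n+k)$ as a sum of non-negative quantities.

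For the equality case, equality throughout forces three pointwise conditions on $\Sigma$: (i) umbilicity, (ii) $\emph{Ric}_f^k(N,N)=0$, and (iii) $kH+\langle\nabla\psi,N\rangle=0$, which is equivalent to $H_f=(n+k)H$. Since $f$-stationarity forces $H_f$ constant by Lemma~\ref{lem:stationary} and $n+k\neq 0$, $H$ is constant on $\Sigma$. Umbilicity plus constant mean curvature puts $\Sigma$ inside a round sphere or a hyperplane; the hyperplane case gives $H=0$, hence $H_f=0$, which is excluded by Remark~\ref{re:minkowski}(2) since $k\neq -n$. So $\Sigma$ is contained in a sphere of radius $r=1/|H|$, and up to a sign of $N$ we may write $N=(p_0-p)/r$ for a center $p_0$.

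The main remaining task is to show $p_0=0$. Plugging $N=(p_0-p)/r$ into (iii) and using $\langle\nabla\psi(p),p\rangle=k$ from Lemma~\ref{lem:prop}(ii) gives $\langle\nabla\psi(p),p_0\rangle=0$ on $\Sigma$. The homogeneity $(\nabla\psi)(tp)=t^{-1}(\nabla\psi)(p)$ of Lemma~\ref{lem:prop}(i) extends this to the open cone $U=\{tp:p\in\Sigma,\,t>0\}$ over $\Sigma$. Passing to $\phi:=f^{1/k}$, which by Lemma~\ref{lem:matrimonio} is concave on $M$ when $k>0$ and convex on $M$ when $k<-n$, and is homogeneous of degree $1$ with $\phi(q)>0$ for $q\in M^*$, the identity reads $\langle\nabla\phi,p_0\rangle=0$ on $U$. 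For $k>0$, concavity then gives $\phi(p+tp_0)\leq\phi(p)$ whenever the segment stays in $M$; letting $p=se\to 0$ along a ray in $U$ yields $\phi(p)\to 0$ while $\phi(p+tp_0)\to t\phi(p_0)$, so $p_0\in M^*$ forces the contradiction $t\phi(p_0)\leq 0$. The same argument with $t\to -t$ rules out $-p_0\in M^*$. For $k<-n$ the parallel argument uses convexity of $\phi$ and reverses the inequalities.

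The hard part is the residual case $p_0,-p_0\notin M$, where the concavity/convexity argument fails because we cannot move along the direction $p_0$ while staying inside $M$. Here the plan is to use the orthogonality of $\Sigma$ to $\partial M$ along $\partial\Sigma$: since $\partial M$ is ruled by rays from $0$, each tangent hyperplane $T_q(\partial M)$ at $q\in\partial\Sigma$ contains $q$, and the orthogonality condition $N(q)\in T_q(\partial M)$ then gives $p_0\in T_q(\partial M)$. Thus $p_0$ lies on every supporting hyperplane of the convex cone $M$ at points of $\partial\Sigma$, and combining this with convexity of $M$ and the smoothness of $\mathcal{D}$ forces the intersection of these supporting hyperplanes to reduce to $\{0\}$, yielding $p_0=0$. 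The converse implication is a routine verification, following Example~\ref{ex:sphere1}: on the intersection of $M$ with the sphere of radius $r$ centered at $0$ with $N(p)=-p/r$, Lemma~\ref{lem:prop}(iii) and \eqref{eq:fmc} give $|\sigma|^2=n/r^2$, $H_f=(n+k)/r$, and $\emph{Ric}_f(N,N)=k/r^2$, so equality in \eqref{eq:sign} holds identically.
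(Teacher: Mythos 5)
Your proof of the inequality \eqref{eq:sign} is exactly the paper's: the bounds $|\sg|^2\geq nH^2$ and $\text{Ric}_f(N,N)\geq\escpr{\nabla\psi,N}^2/k$ combined with the algebraic identity that collapses the left side to $\tfrac{n}{k(n+k)}\bigl(\escpr{\nabla\psi,N}+kH\bigr)^2\geq 0$. The derivation of the three equality conditions, the exclusion of the hyperplane case via $H_f\neq 0$, and the reduction to showing $p_0=0$ also match. The divergence is in how you show $p_0=0$, and there your argument has genuine gaps. First, the case $k<-n$: reversing concavity to convexity of $\phi=f^{1/k}$ turns your key inequality into $\phi(se+tp_0)\geq\phi(se)$, and letting $s\to 0$ yields $t\,\phi(p_0)\geq 0$, which is true and gives no contradiction. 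The ``parallel argument'' does not exist as stated; a convex positive $1$-homogeneous function (e.g.\ a norm) can perfectly well satisfy $\escpr{\nabla\phi(p),p_0}=0$ on a cone of directions with $p_0\neq 0$. Second, your residual case $p_0,-p_0\notin M$ rests on the claim that $\bigcap_{q\in\ptl\Sg}T_q(\ptl M)=\{0\}$, which is false in general: for a half-space all these tangent hyperplanes coincide with $\ptl M$, and for cones over small regions of $\sph^n$ the intersection is typically a positive-dimensional subspace. Moreover this case cannot even be started when $\ptl\Sg=\emptyset$. Third, the whole route uses Lemma~\ref{lem:matrimonio} and requires the segments $[se,\,se+tp_0]$ to stay in $M^*$, i.e.\ it needs convexity of $M$, which is \emph{not} a hypothesis of the lemma (and the paper's statement and proof do not assume it).

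The paper's argument for $p_0=0$ is different and avoids all of this: it picks a point $q_0\in S$ at maximum distance from the vertex. At an interior maximum $q_0$ is proportional to $N(q_0)=(p_0-q_0)/r$, hence $q_0=tp_0$ with $t>0$; at a boundary maximum the free-boundary orthogonality gives the same conclusion. Then homogeneity (Lemma~\ref{lem:prop}~(i),(ii)) yields $\escpr{(\nabla\psi)(q_0),p_0}=k/t\neq 0$, contradicting the identity $\escpr{(\nabla\psi)(p),p_0}=0$ on $S$ that you also derived. This works uniformly for $k>0$ and $k<-n$, needs no convexity of $M$ and no discussion of where $p_0$ sits relative to $M$. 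You should replace your concavity/supporting-hyperplane scheme by an argument of this type, or at minimum restrict your claim to convex cones with $k>0$ and supply a genuine proof of the residual case.
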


\begin{proof}
Let $H$ be the Euclidean mean curvature of $\Sg$. Recall that $|\sg|^2\geq nH^2$ with equality in $\Sg$ for $n\geq 2$ if and only if $\Sg$ is totally umbilical \cite[Lem.~3.2]{bdc}. On the other hand, the hypothesis $\text{Ric}_f^k\geq 0$ is equivalent by \eqref{eq:fkricci} to $\text{Ric}_f\geq (1/k)(d\psi\otimes d\psi)$. By using the definition of $f$-mean curvature in \eqref{eq:fmc} and simplifying, we obtain
\begin{align*}
\text{Ric}_f(N,N)+|\sg|^2-\frac{H^2_f}{n+k}&\geq
\frac{\escpr{\nabla\psi,N}^2}{k}+nH^2-\frac{n^2H^2+\escpr{\nabla\psi,N}^2-2nH\escpr{\nabla\psi,N}}{n+k}
\\
&=\frac{n}{k\,(n+k)}\,\big(\escpr{\nabla\psi,N}+kH\big)^2\geq 0. 
\end{align*}
The fact that equality holds in \eqref{eq:sign} if $\Sg$ is contained inside a round sphere centered at the vertex follows from the computations in Examples~\ref{ex:sphere1} and \ref{ex:sphere2}. Conversely, let us suppose that we have equality in \eqref{eq:sign} for a compact $f$-stationary hypersurface $\Sg$ with $\ptl\Sg\sub\ptl M$. Then we get these identities along $\Sg$
\[
|\sg|^2=nH^2,\quad \text{Ric}_f(N,N)=\frac{\escpr{\nabla\psi,N}^2}{k},
\quad \escpr{\nabla\psi,N}=-kH.
\]
Note that $H_f=(n+k)H\neq0$ by \eqref{eq:fmc} and \eqref{eq:mink1}. From the equality $|\sg|^2=nH^2$ we conclude that there exists $r>0$ such that any connected component of $\Sg$ is contained in a round sphere of radius $r$. To finish the proof it suffices to show that the center of any of these spheres is the vertex of the cone.

Let $S$ be a connected component of $\Sg$ centered at $p_0$. We consider the unit normal along $S$ given by $N(p)=(p_0-p)/r$. By equality $\escpr{\nabla\psi,N}=-kH=-k/r$ and Lemma~\ref{lem:prop} (ii), we get $\escpr{(\nabla\psi)(p),p_0}=0$ for any $p\in S$. Let $q_0\in S$ be a point at maximum distance from the vertex. If $q_0\in\ptl S$ then the orthogonality condition along $\ptl S$ implies that $q_0$ is proportional to $N(q_0)$, so that $p_0\in\ptl M$ or $p_0\in -(\ptl M)$. Cutting with a plane passing through $0$ and containing $\{\nu(q_0),N(q_0)\}$ we see that $p_0\in\ptl M$. In the case $q_0\in S-\ptl S$ a similar argument gives $p_0\in\text{int}(M)$. Suppose $p_0\neq 0$. Then, we would find $t>0$ such that $q_0=tp_0$. By using equalities (i) and (ii) in Lemma~\ref{lem:prop}, we would get $\escpr{(\nabla\psi)(q_0),p_0}=t^{-1}\escpr{(\nabla\psi)(p_0),p_0}=k/t\neq 0$, a contradiction.
\end{proof}

\begin{remark}[Planar cones]
We must note that Theorem~\ref{th:main} holds in the planar case even if we do not assume convexity of the cone $M$. In this case the proof is simpler since $\text{II}(N,N)$ identically vanishes along $\ptl\Sg$.
\end{remark}

Now we will extend Theorem~\ref{th:main} to $f$-stable hypersurfaces \emph{that may contain the vertex} of a cone with a homogeneous density of positive degree. Note that this is a non-trivial task, since the vertex is a singular point for both, the cone and the density. However, for positive degree we know from \eqref{eq:limits} that the density is bounded near the vertex. This fact will allow us to perform an approximation argument similar to the one in \cite[Sect.~4]{cones}. The key ingredient is the existence of a sequence of functions as in the following lemma.

\begin{lemma}
\label{lem:aprox}
Let $\Sg$ be a smooth, compact, orientable hypersurface embedded in a solid cone $M\subeq\rrn$ with a homogeneous density $f=e^\psi$ of degree $k\geq 0$. We suppose $n\geq 2$ or $n+k>2$. Then, there is a sequence $\{\var_\eps\}_{\eps>0}$ of functions in $C_0^\infty(\Sg-\{0\})$ satisfying:
\begin{itemize}
\item[(i)] $0\leq\var_\eps\leq 1$, for any $\eps>0$,
\item[(ii)] $\lim_{\eps\to 0}\var_\eps(p)=1$, for any $p\in\Sg-\{0\}$,
\item[(iii)] $\lim_{\eps\to 0}\int_\Sg |\nabla_\Sg\var_\eps|^2\,da_f=0$.
\end{itemize}
\end{lemma}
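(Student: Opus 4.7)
The strategy is to build a standard logarithmic cutoff at the origin and show that its weighted Dirichlet energy on $\Sg$ collapses in the limit $\eps\to 0$. If $0\notin\Sg$, then $\Sg-\{0\}=\Sg$ and the constant function $\var_\eps\equiv 1$ lies in $C^\infty_0(\Sg-\{0\})$ and trivially satisfies (i)--(iii). From now on I assume $0\in\Sg$.

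Fix a smooth function $\chi\colon\rr\to[0,1]$ with $\chi(s)=0$ for $s\leq 1$, $\chi(s)=1$ for $s\geq 2$, and $|\chi'|\leq 2$. For small $\eps>0$ define
\[
\var_\eps(p):=\chi\!\left(\frac{\log(|p|/\eps^{2})}{\log(1/\eps)}\right).
\]
This is smooth on $\Sg-\{0\}$ since $|p|$ is, it vanishes on $\Sg\cap\{|p|\leq\eps\}$, equals $1$ on $\Sg\cap\{|p|\geq 1\}$, and takes values in $[0,1]$. Hence $\var_\eps\in C^\infty_0(\Sg-\{0\})$, proving (i); and (ii) is immediate because for each fixed $p\neq 0$ one has $\var_\eps(p)=1$ once $\eps$ is small enough.

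The heart of the argument is (iii). On the annulus $\eps\leq|p|\leq 1$ the chain rule gives the pointwise bound
\[
|\nabla_\Sg\var_\eps|^2(p)\leq |\nabla_{\rrn}\var_\eps|^2(p)\leq \frac{4}{|p|^{2}\log^{2}(1/\eps)},
\]
and $\nabla_\Sg\var_\eps$ vanishes off this region. Because $k\geq 0$ and $f$ is $k$-homogeneous, the representation \eqref{eq:caract} gives a uniform bound $f(p)\leq c\,|p|^k$ for $p$ near $0$. Because $\Sg$ is a smooth $n$-dimensional submanifold of $\rrn$ passing through $0$, a graph parametrization over $T_0\Sg$ (using a half-space graph if $0\in\ptl\Sg$) delivers the standard area bound $A(\Sg\cap B_r)\leq C r^n$ for small $r$, so that $\int_{\Sg\cap B_r}f\,da\leq C''\,r^{n+k}$. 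Dyadically decompose the annulus via $A_j:=\Sg\cap\{2^{-j-1}<|p|\leq 2^{-j}\}$, $j\geq 0$. On each shell,
\[
\int_{A_j}|\nabla_\Sg\var_\eps|^{2}\,da_f\leq \frac{\tilde C\,2^{2j}}{\log^{2}(1/\eps)}\int_{A_j}f\,da\leq \frac{\tilde C\,2^{-j(n+k-2)}}{\log^{2}(1/\eps)}.
\]
Summing over $0\leq j\leq\lfloor\log_2(1/\eps)\rfloor$ and using the hypothesis $n\geq 2$ or $n+k>2$ (which in either case forces $n+k\geq 2$), the geometric series either converges to a constant (when $n+k>2$, yielding a total of order $\log^{-2}(1/\eps)$) or has length $\sim\log(1/\eps)$ (when $n+k=2$, yielding a total of order $\log^{-1}(1/\eps)$). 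In both cases the sum tends to $0$, proving (iii).

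The main technical obstacle is the borderline case $n+k=2$, where any power-law cutoff would fail and only the logarithmic scaling is sharp enough to force the dyadic sum to collapse; exactly this is what makes the weaker hypothesis ``$n\ge 2$ \emph{or} $n+k>2$'' permissible rather than the naive ``$n+k>2$''. Secondarily one must ensure the area growth $A(\Sg\cap B_r)\leq Cr^n$ is valid at $0$, including when $0\in\ptl\Sg$, which is handled by the graph parametrization noted above. Smoothness of $\var_\eps$ is automatic since $|p|$ is smooth on $\rrn-\{0\}$ and $\chi$ is smooth, so no mollification is needed.
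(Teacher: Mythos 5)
Your proof is correct, and it takes a genuinely different route from the paper's. The paper argues by cases matching the two branches of the hypothesis: for $n\geq 2$ it invokes the known construction of \cite[Lem.~2.4]{sz} and \cite[Lem.~3.1]{morgan-rit} (treating $0$ as an isolated singularity of $\Sg$, the Euclidean energy of the standard cutoffs tends to zero, and boundedness of $f$ near $0$ for $k\geq 0$ upgrades this to the weighted energy); for $n+k>2$ it uses the simple linear-scale cutoff $\var_\eps(p)=\var(p/\eps)$ with $|\nabla\var_\eps|^2\leq c/\eps^2$, so that the weighted area bound $A_f(\Sg\cap B_\eps)\leq c''\,\eps^{n+k}m(\eps)$ gives energy $O(\eps^{n+k-2})$. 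You instead use a single logarithmic cutoff on the annulus $\eps\leq|p|\leq 1$ together with a dyadic decomposition, which handles both branches at once: your verification that the hypothesis forces $n+k\geq 2$ is correct, and your dyadic sum collapses like $\log^{-2}(1/\eps)$ when $n+k>2$ and like $\log^{-1}(1/\eps)$ in the borderline case $n+k=2$ (which subsumes the paper's $n\geq2$, $k\geq0$ branch, where the classical point-capacity argument lives). Your chain-rule bound $|\nabla_\Sg\var_\eps|\leq|\nabla_{\rrn}\var_\eps|\leq 2/(|p|\log(1/\eps))$, the global bound $f(p)\leq B|p|^k$ from \eqref{eq:caract}, and the local area estimate $A(\Sg\cap B_r)\leq Cr^n$ (valid also when $0\in\ptl\Sg$, where the ratio $m(r)$ tends to $1/2$ rather than $1$ but remains bounded, which is all either argument needs) are each sound. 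What you lose relative to the paper is the polynomial rate $\eps^{n+k-2}$ in the supercritical case and the economy of citing known lemmas; what you gain is a self-contained, unified argument that in fact establishes the lemma under the slightly weaker hypothesis $n+k\geq 2$ (e.g.\ $n=1$, $k=1$, which the stated hypothesis excludes).
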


\begin{proof}
For $n\geq 2$ the construction of a sequence $\{\var_\eps\}_{\eps>0}$ satisfying (i), (ii), and whose Euclidean energy tends to zero follows from \cite[Lem.~2.4]{sz} and \cite[Lem.~3.1]{morgan-rit} if we consider $0$ as an isolated singularity of $\Sg$. The hypothesis $k\geq 0$ implies by \eqref{eq:caract} and \eqref{eq:limits} that $f$ is bounded near the origin, so that such a sequence also satisfies (iii). 

Let us prove the case $n+k>2$. We denote by $B_r$ the open ball of radius $r>0$ centered at $0$. Take a smooth function $\var:\rrn\to [0,1]$ such that $\var=0$ in $B_{1/2}$ and $\var=1$ in $\rrn-B_1$. We define $\var_\eps(p):=\var(p/\eps)$. This is a sequence of smooth functions with $\var_\eps=0$ in $B_{\eps/2}$, $\var_\eps=1$ in $\rrn-B_\eps$ and $|\nabla\var_\eps|^2\leq c/\eps^2$ for some constant $c>0$. Clearly this sequence satisfies properties (i) and (ii). On the other hand, it is well-known that the function $m(\eps):=A(\Sg\cap B_\eps)/A(\ptl B_\eps)$ tends to $1$ when $\eps\to 0$. Here $A(\cdot)$ stands for the Euclidean area. By taking into account \eqref{eq:caract} and that $k\geq 0$, we can find constants $c', c''>0$ such that
\[
A_f(\Sg\cap B_\eps)\leq c'\,\eps^k A(\Sg\cap B_\eps)
=c''\,\eps^{n+k}\,m(\eps).
\]
Therefore, we deduce that
\[
\int_\Sg|\nabla_\Sg\var_\eps|^2\,da_f\leq\int_{\Sg\cap B_\eps}|\nabla\var_\eps|^2\,da_f\leq\frac{c}{\eps^2}\,A_f(\Sg\cap B_\eps)\leq c'''\,\eps^{n+k-2}\,m(\eps),
\]
which tends to zero when $\eps\to 0$ since $n+k>2$.
\end{proof}

We are now ready to prove the following result.

\begin{theorem}
\label{th:main2}
Let $M\subeq\rrn$ be a convex solid cone endowed with a  $k$-homogeneous density $f=e^\psi$ such that $k>0$, $n+k>2$, and $\emph{Ric}_f^k\geq 0$. Let $\Sg$ be a smooth, compact, orientable hypersurface embedded in $M$ with $\ptl\Sg\sub\ptl M$. If $\Sg$ is $f$-stable, then $\Sg$ is the intersection with $M$ of a round sphere centered at the vertex.
\end{theorem}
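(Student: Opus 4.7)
The plan is to reduce Theorem~\ref{th:main2} to Theorem~\ref{th:main} by an approximation procedure, exploiting that for $k>0$ the density $f$ stays bounded near the vertex by \eqref{eq:limits}. If $0\notin\Sg$, then $\Sg\sub M^*$ and Theorem~\ref{th:main} applies directly (the hypothesis $k\neq-(n+1)$ is automatic for $k>0$). So I may assume $0\in\Sg$. Since $\Sg$ is $f$-stable, hence $f$-stationary, Lemma~\ref{lem:stationary} applied to volume-preserving variations supported in $\Sg-\{0\}\sub M^*$ shows that $H_f$ is constant there and $\Sg$ meets $\ptl M$ orthogonally on $\ptl\Sg-\{0\}$. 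An approximation argument for the Minkowski identity \eqref{eq:mink1} --- apply Lemma~\ref{lem:divthsup} to $X^\top$ on the truncated hypersurface $\Sg\cap(M\setminus B_\eps)$, and note that the spurious boundary contribution along $\Sg\cap\ptl B_\eps$ is of order $\eps^{n+k}\to 0$ --- shows that the smooth function $u(p):=n+k+H_f\escpr{p,N(p)}$ on $\Sg$ satisfies $\int_\Sg u\,da_f=0$.

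The main step is to insert a cut-off version of $u$ into the stability inequality. Let $\{\var_\eps\}$ be the sequence supplied by Lemma~\ref{lem:aprox}, whose hypotheses $k\geq 0$ and $n+k>2$ are both in force. Define
\[
u_\eps:=\var_\eps\,(u-\lambda_\eps),\qquad \lambda_\eps:=\frac{\int_\Sg\var_\eps\,u\,da_f}{\int_\Sg\var_\eps\,da_f},
\]
so that $u_\eps\in C^\infty_0(\Sg-\{0\})$ and $\int_\Sg u_\eps\,da_f=0$; dominated convergence gives $\lambda_\eps\to 0$. Lemma~\ref{lem:stable}(ii) then provides $\mathcal{I}_f(u_\eps,u_\eps)\geq 0$, and I claim $\mathcal{I}_f(u_\eps,u_\eps)\to\mathcal{I}_f(u,u)$ as $\eps\to 0$. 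The gradient cross-term and square term involving $\nabla_\Sg\var_\eps$ vanish in the limit by Cauchy--Schwarz and property~(iii) of Lemma~\ref{lem:aprox}, while the curvature and boundary integrals converge by dominated convergence. The hardest step --- and the one that pins down the hypothesis $n+k>2$ --- is the integrability near $0$ of $u^2\,\text{Ric}_f(N,N)\,da_f$: using $|\text{Ric}_f(N,N)|\leq C\,|p|^{-2}$, $f\leq C\,|p|^k$, and the $n$-dimensional area of $\Sg\cap B_r$ bounded by $C\,r^n$ (from smoothness of the embedding), the offending integral is controlled by $\int_0^1 r^{n+k-3}\,dr$, which is finite precisely when $n+k>2$.

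With $\mathcal{I}_f(u,u)\geq 0$ in place, the explicit computation in the proof of Theorem~\ref{th:main} --- based on $\mathcal{L}_f(1)=\text{Ric}_f(N,N)+|\sg|^2$, $\mathcal{L}_f\escpr{X,N}=-H_f$, and $\ptl\escpr{X,N}/\ptl\nu=-\text{II}(N,N)\escpr{X,N}$ on $\Sg-\{0\}$ --- reduces the stability inequality to
\[
0\leq -(n+k)^2\left[\int_\Sg\Big\{\text{Ric}_f(N,N)+|\sg|^2-\frac{H_f^2}{n+k}\Big\}\,da_f+\int_{\ptl\Sg}\text{II}(N,N)\,dl_f\right].
\]
Convexity of $M$ and the pointwise inequality \eqref{eq:sign} of Lemma~\ref{lem:umbilical} on $\Sg-\{0\}\sub M^*$ force both integrands to vanish. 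The pointwise equality case of Lemma~\ref{lem:umbilical} (noting $H_f\neq 0$ via the extended Minkowski identity \eqref{eq:mink2}) then gives $|\sg|^2=nH^2$ on $\Sg-\{0\}$, so each connected component of $\Sg-\{0\}$ is contained in a round sphere of positive radius, and the ancillary identities $\escpr{\nabla\psi,N}=-kH$ together with Lemma~\ref{lem:prop}(ii) show --- by the argument in the proof of Lemma~\ref{lem:umbilical}, adapted to components accumulating at $0$ via the blowup of $\nabla\psi$ --- that $0\notin\Sg$. Theorem~\ref{th:main} now completes the proof.
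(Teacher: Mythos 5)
Your proposal is correct and follows the same overall strategy as the paper: reduce to the punctured case via the cut-off sequence of Lemma~\ref{lem:aprox}, extend the Minkowski identity \eqref{eq:mink1} and the stability inequality to the test function $u=n+k+H_f\escpr{X,N}$, and then rerun the proof of Theorem~\ref{th:main}. The one genuinely different ingredient is how you secure integrability of the curvature terms near the vertex. The paper (its Steps 2--3, following \cite[Lem.~4.5, 4.7]{cones}) first extends the stability inequality to bounded admissible functions via Fatou's lemma --- which is where convexity of $M$ and $\text{Ric}_f\geq 0$ enter at this stage --- and then \emph{extracts} the memberships $\text{Ric}_f(N,N)+|\sg|^2\in L^1(\Sg,da_f)$ and $\text{II}(N,N)\in L^1(\ptl\Sg,dl_f)$ from that inequality. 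You instead prove them directly from the scaling bounds $|\nabla^2\psi|\leq C|p|^{-2}$ (Lemma~\ref{lem:gradhess}), $f\leq C|p|^k$, and the density bound $A(\Sg\cap B_r)=O(r^n)$, which converge exactly when $n+k>2$; this lets you use dominated convergence with an explicit dominating function (no sign condition needed for the limit $\mathcal{I}_f(u_\eps,u_\eps)\to\mathcal{I}_f(u,u)$) and makes transparent where the hypothesis $n+k>2$ is used. Your route is more elementary and self-contained; the paper's is the one that survives in borderline situations (e.g.\ $k=0$, $n=2$ in \cite{cones}) where the direct estimate fails. Two points you leave implicit but should record: (i) the boundary term $\int_{\ptl\Sg}\text{II}(N,N)\,u_\eps^2\,dl_f$ needs the analogous estimate, since $\text{II}$ blows up like $|p|^{-1}$ at the vertex and $\mathcal{H}^{n-1}(\ptl\Sg\cap B_r)=O(r^{n-1})$, again giving $O(r^{n+k-2})$; and (ii) passing from $\mathcal{I}_f(u,u)\geq 0$ to your displayed inequality uses the integration-by-parts identities \eqref{eq:i1i2} and \eqref{eq:ulfv} on the punctured hypersurface, which require the same truncation justification you invoked for the Minkowski identity (the paper isolates this as its Step 1). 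Neither point affects the validity of your argument.
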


\begin{proof}
We will show that the proof of Theorem~\ref{th:main} can be formally carried out even in the case $0\in\Sg$. We will explain briefly the steps of the argument and omit the details. We will follow the same notation as in the proof of Theorem~\ref{th:main}.

\emph{Step 1.} We take a sequence $\{\var_\eps\}_{\eps>0}$ as in Lemma~\ref{lem:aprox}. Following \cite[Lem.~4.4]{cones} we can prove that the divergence theorem in Lemma~\ref{lem:divthsup} holds for any smooth vector field $X$ on $\Sg-\{0\}$ satisfying $|X|^2, \divv_{\Sg,f}X\in L^1(\Sg,da_f)$ and $\escpr{X,\nu}\in L^1(\ptl\Sg,dl_f)$. Here we denote by $L^1(\Sg,da_f)$ and $L^1(\ptl\Sg,dl_f)$ the spaces of integrable functions with respect to the weighted measures $da_f$ and $dl_f$. As a consequence, we can extend Corollary~\ref{cor:ibp} and formula \eqref{eq:ulfv} to any pair of bounded functions $u_1, u_2\in C^\infty(\Sg-\{0\})$ such that $|\nabla_\Sg u_i|^2, \Delta_{\Sg,f}u_i\in L^1(\Sg,da_f)$ and $\ptl u_i/\ptl\nu\in L^1(\ptl\Sg,dl_f)$, for $i=1,2$.

\emph{Step 2.} We reason as in \cite[Lem.~4.5]{cones} to prove that the stability inequality $\mathcal{Q}_f(u,u)\geq 0$ in Lemma~\ref{lem:stable} (ii) is valid for any bounded function $u\in C^\infty(\Sg-\{0\})$ with $\int_\Sg u\,da_f=0$ provided $|\nabla_\Sg u|^2, \Delta_{\Sg,f}u\in L^1(\Sg,da_f)$ and $\ptl u/\ptl\nu\in L^1(\ptl\Sg,dl_f)$. We first use approximation to see that $\mathcal{I}_f(u,u)\geq 0$. Here the convexity of the cone $M$ and the nonnegativity of $\text{Ric}_f$ are essential in order to apply Fatou's lemma. Second, we use the generalization of Corollary~\ref{cor:ibp} obtained in Step 1 to deduce $\mathcal{Q}_f(u,u)=\mathcal{I}_f(u,u)\geq 0$.

\emph{Step 3.} We can use the stability inequality of Step 2 as in \cite[Lem.~4.7]{cones} to get two integrability properties, namely $\text{Ric}_f(N,N)+|\sg|^2\in L^1(\Sg,da_f)$ and $\text{II}(N,N)\in L^1(\ptl\Sg,dl_f)$. These are non-trivial facts since the density $f$ and the cone $M$ have a singularity at $0$.

\emph{Step 4.} We define $u:=n+k+H_fg$, where $g=\escpr{X,N}$ and $X(p)=p$. We can apply to the vector field $X$ the divergence theorem obtained in Step 1  since $\Sg$ is compact and $f$ is bounded near the origin. By using that $\Sg$ is $f$-stationary we deduce, as in Proposition~\ref{prop:mink}, that $\int_\Sg u\,da_f=0$. Moreover, by equality~\eqref{eq:lfg} we have 
\[
\Delta_{\Sg,f}u=H_f\,\Delta_{\Sg,f}g=-H^2_f-H_f\,(\text{Ric}_f(N,N)+|\sg|^2)\,g.
\] 
By using Step 3, it follows that $u$ satisfies all the integrability conditions stated in Step 2 to ensure that $\mathcal{Q}_f(u,u)\geq 0$.

From this point we can reproduce the proof of Theorem~\ref{th:main} to get the claim.
\end{proof}

We finish this section by showing that our stability results in Theorems~\ref{th:main} and~\ref{th:main2} can be applied in some relevant examples. 

\begin{example}
\label{ex:models2}
Let $\xi:\rrn\to\rr$ be a linear function and $M$ a convex solid cone contained in the open half-space of $\rrn$ where $\xi>0$. For any $k\neq 0$ we saw in Example~\ref{ex:models} that the density $f:=\xi^k$ is $k$-homogeneous and $\text{Ric}_f^k$ vanishes identically. Thus we can apply our results provided $k<-n$ and $k\neq -(n+1)$, or $k>0$.
\end{example}

\begin{example}
\label{ex:monomial}
The following monomial densities were first studied for half-planes in \cite{maderna} and more generally in \cite{cabre2}. Let $\alpha_i$, $i=1,\ldots, n+1$, be non-negative real numbers such that $\alpha_i>0$ for some $i$. Consider $C:=\{(x_1,\ldots, x_{n+1})\in\rrn\,; x_i\geq 0\text{ for all } i \text{ such that }\alpha_i>0\}$, which is a convex cone. The function $f:C\to\rr$ given by $f(x_1,\ldots, x_{n+1}):=x_1^{\alpha_1}\cdots\,x_{n+1}^{\alpha_{n+1}}$ is a  homogeneous density of degree $k=\alpha_1+\ldots +\alpha_{n+1}$ vanishing along $\ptl C$. This density satisfies $\text{Ric}_f^k\geq 0$  by Lemma~\ref{lem:matrimonio} since $k>0$ and $f^{1/k}$ is concave. The main result in \cite{cabre2} implies that any spherical cap of $C$ centered at the vertex minimize weighted area among hypersurfaces separating the same weighted volume. Our stability results classify $f$-stable hypersurfaces inside any convex solid cone $M$ with $M^*\sub\text{int}(C)$. More interesting examples of positively homogeneous densities where the isoperimetric problem has been solved and our result is applied are described in \cite[Sect.~2]{cabre3}.
\end{example}

\begin{example}
Consider the $k$-homogeneous radial density $f(p)=|p|^k$ in a convex solid cone $M\subeq\rrn$. If $k<0$ then $\text{Ric}_f^k\geq 0$ by Corollary~\ref{cor:ricgeq0} since the restriction of $f$ to $M\cap\sph^n$ is constant. Hence Theorem~\ref{th:main} applies provided $k<-n$ and $k\neq -(n+1)$. In particular, in $\rrn$ with density $|p|^k$, 
$k<-n$, $k\neq -(n+1)$, the unique compact $f$-stable hypersurfaces which do not contain $0$ are round spheres centered at $0$. In fact, it was proved in \cite[Prop.~7.5]{dhht} that, if $k<-(n+1)$, then these spheres uniquely minimize the weighted area for fixed weighted volume. In the case $-(n+1)<k<0$ such spheres are $f$-stable by \cite[Thm.~3.10]{rcbm} but not $f$-isoperimetric, see \cite[Prop.~7.3]{dhht}. The isoperimetric property of round spheres centered at $0$ for arbitrary radial densities in $\rrn$ is an interesting question, which leads to the log-convex density conjecture, see \cite{lcdc}. 
\end{example}

\begin{example}
\label{ex:models3}
Let $M=0\cone\mathcal{D}$ be any convex solid cone in $\rrn$. Let us see that, besides the radial case $f(p)=|p|^k$, there are several homogeneous non-radial densities of degree $k<0$ where $\text{Ric}_f^k\geq 0$. Take any smooth function $\mu:\mathcal{D}\to\rr$ such that $c_\mu:=\max\{(\nabla^2_{\sph^n}\mu)_p(v,v);\,p\in\mathcal{D}, \, v\in T_p\sph^n, \, |v|=1\}$ is positive. Fix a number $k<0$ and consider the $k$-homogeneous density in $M$ defined by $f_\mu(p):=e^{\mu(p/|p|)}|p|^k$. Then, for any $a\in (0,-k/c_\mu]$, it is clear that 
$k+c_{a\mu}\leq 0$ and so, the density $f_{a\mu}$ has non-negative Bakry-\'Emery-Ricci tensor by Corollary~\ref{cor:ricgeq0}. Hence Theorem~\ref{th:main} applies provided $k<-n$ and $k\neq -(n+1)$.
\end{example}

\begin{remark}
\label{re:cabre}
As pointed out in Example~\ref{ex:sphere2}, the conditions of Theorem~\ref{th:main} imply that any round sphere centered at the vertex and intersected with the cone is $f$-stable. Indeed, for $k>0$ these spherical caps minimize the weighted area relative to the interior of the cone among hypersurfaces separating the same weighted volume. This isoperimetric property has been proved by Cabr\'e, Ros-Oton and Serra in \cite{cabre} and \cite{cabre3} for an arbitrary convex cone $M$ endowed with a continuous, homogeneous, non-negative density function $f$ of degree $k>0$, such that $f$ is positive and locally Lipschitz in the points of $\text{int}(M)$ and $f^{1/k}$ is concave in $\text{int}(M)$. In \cite{homoiso} we obtain uniqueness of the isoperimetric regions for smooth cones with $k$-homogeneous smooth densities satisfying $k>0$ and $\text{Ric}^k_f\geq 0$. From Lemma~\ref{lem:matrimonio} we deduce that the hypothesis $\text{Ric}^k_f\geq 0$ is equivalent, for $C^2$ homogeneous densities of positive degree, to the concavity of $f^{1/k}$ assumed in \cite{cabre} and \cite{cabre3}.
\end{remark}

\begin{remark}
Theorem~\ref{th:main} need not hold if the hypothesis $\text{Ric}_f^k\geq 0$ fails. Consider the density $f(p)=|p|^k$ with $k>0$ in $\rrn$. From the computations in Lemma~\ref{lem:gradhess} it is easy to check that $\text{Ric}_f^k(u,u)<0$ for any $u\neq 0$ tangent to $\sph^n$. It is known by \cite[Thm.~3.10]{rcbm} that round spheres about the origin are $f$-unstable. It was proved in \cite[Thm.~3.16]{dahlberg} that round circles \emph{containing the origin} uniquely minimize weighted length for fixed weighted area in the plane with density $f$. In higher dimension it is an open question if any $f$-isoperimetric or compact $f$-stable hypersurface is a sphere through the origin (such spheres have constant $f$-mean curvature by Example~\ref{ex:spheres11}). In \cite[Thm.~4.17]{dhht} we find examples of planar convex cones where compact $f$-stable curves are not contained into circles centered at $0$.
\end{remark}

\section{Classification of compact strongly $f$-stable hypersurfaces}
\label{sec:main2}
\setcounter{equation}{0}

In this last section we obtain some results for compact strongly $f$-stable hypersurfaces inside a solid cone $M$ with a homogeneous density $f$ of degree $k$. 

We first analyze the case $k>0$. Observe that, when the cone $M$ is convex and the $k$-dimensional Bakry-\'Emery-Ricci tensor satisfies $\text{Ric}^k_f\geq 0$, then we can apply Theorem~\ref{th:main} to deduce that a smooth, compact, orientable, strongly $f$-stable hypersurface $\Sg$ immersed in $M^*$ with $\ptl\Sg\sub\ptl M$ must be the intersection with $M$ of a round sphere centered at the vertex. However, these spherical caps are not strongly $f$-stable, as it is shown in Example~\ref{ex:sphere2}. In the next result we prove that the same happens if the $f$-Ricci tensor is nonnegative.

\begin{proposition}
\label{prop:nonexist}
Let $M\subeq\rrn$ be a convex solid cone endowed with a homogeneous density $f$ whose $f$-Ricci tensor satisfies $\emph{Ric}_f\geq 0$. Then, there are no smooth, compact, orientable, strongly $f$-stable hypersurfaces immersed in $M^*$ with boundary inside $\ptl M$.
\end{proposition}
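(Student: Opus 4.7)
The plan is a short proof by contradiction that tests strong $f$-stability against the dilation variation $\phi_t(p):=e^tp$. Assume such a $\Sg$ exists. Since strong $f$-stability presupposes $f$-stationarity, Lemma~\ref{lem:stationary} guarantees that $H_f$ is constant on $\Sg$ and $\Sg$ meets $\ptl M$ orthogonally along $\ptl\Sg$. Moreover $A_f(\Sg)>0$, since $\Sg\subeq M^*$ is a nondegenerate hypersurface and $f>0$.

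The first observation is that the hypothesis $\text{Ric}_f\geq 0$ by itself forces $k\geq 0$: evaluating the inequality at $p\in M^*$ on the tangent vector $X(p)=p$, using that $\rrn$ is Ricci-flat and the identity $(\nabla^2\psi)_p(p,p)=-k$ from Lemma~\ref{lem:prop}(iii), one obtains
\[
\text{Ric}_f(p,p)=-(\nabla^2\psi)_p(p,p)=k\geq 0.
\]
In particular $k\neq -(n+1)$, and the Minkowski identity~\eqref{eq:mink2} reads $(n+k)A_f(\Sg)=(n+k+1)H_f V_f(\Sg)$.

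The dilation $\phi_t$ is an admissible variation: it preserves $M^*$, preserves $\ptl M$ setwise (because $M$ is a cone), and preserves the orthogonality of $\Sg$ with $\ptl M$ (dilations are conformal). By Proposition~\ref{prop:keyfacts}(iv) and \eqref{eq:voldil},
\[
A_f(\phi_t\Sg)=e^{(n+k)t}A_f(\Sg),\qquad V_f(\phi_t\Sg)=e^{(n+k+1)t}V_f(\Sg).
\]
Differentiating twice at $t=0$ and using the Minkowski identity to eliminate $H_fV_f(\Sg)$ gives the clean expression
\[
(A_f-H_fV_f)''(0)=(n+k)^2A_f(\Sg)-(n+k+1)^2H_fV_f(\Sg)=-(n+k)\,A_f(\Sg).
\]
Strong $f$-stability forces this quantity to be $\geq 0$; since $A_f(\Sg)>0$ we conclude $n+k\leq 0$, i.e.\ $k\leq -n$, which is incompatible with $k\geq 0$ for $n\geq 1$. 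The contradiction rules out the existence of $\Sg$.

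No real obstacle is expected. The only delicate point is checking that the direct scaling computation of the second variation coincides with the general formula $\mathcal{I}_f(g,g)$ for $g=\escpr{X,N}$, which can be verified by using $\mathcal{L}_fg=-H_f$ from~\eqref{eq:lfg}, the boundary identity $\ptl g/\ptl\nu=-\text{II}(N,N)\,g$ recalled in the proof of Theorem~\ref{th:main}, and the definition~\eqref{eq:orivol} of the oriented weighted volume.
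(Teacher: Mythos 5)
Your proof is correct, and it takes a genuinely different route from the paper's. The paper inserts the constant test function $u=1$ into the index form $\mathcal{Q}_f$, uses convexity of $M$ and $\mathrm{Ric}_f\geq 0$ to conclude $|\sg|^2=0$, and then rules out pieces of hyperplanes by a geometric argument locating the point of $\Sg$ farthest from the vertex (orthogonality along $\ptl\Sg$ forces such a hypersurface to lie in a support hyperplane of a sphere centered at $0$, contradicting maximality). You instead test against the dilation variation $\phi_t(p)=e^tp$ and compute $(A_f-H_fV_f)''(0)=-(n+k)\,A_f(\Sg)$ directly from the homogeneity laws and the Minkowski identity \eqref{eq:mink2}, forcing $k\leq -n$, which contradicts $k\geq 0$ (itself a correct consequence of $\mathrm{Ric}_f(p,p)=k$, a fact the paper also records right after the proposition). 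Your argument is notable in that it uses neither the convexity of $M$ nor the full strength of $\mathrm{Ric}_f\geq 0$ beyond the scalar consequence $k\geq 0$, and it needs no second variation formula, only the literal definition of strong $f$-stability applied to an explicit admissible variation; it is also exactly consistent with Example~\ref{ex:sphere2}, where the same quantity $-(n+k)A_f(\Sg)$ appears for spherical caps. The paper's route, by contrast, extracts the stronger geometric conclusion $|\sg|^2=0$, which is reused almost verbatim in the proof of Theorem~\ref{th:minimal}. The only points worth making explicit in your write-up are that $A_f(\Sg)>0$ (which follows since the orthogonality condition prevents $\Sg$ from lying inside $\ptl M$) and that the dilation is an admissible variation in the sense of Section~\ref{sec:variational}, both of which you address adequately.
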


\begin{proof}
We reason by contradiction. Suppose there was a hypersurface $\Sg$ in the conditions of the statement. By inserting $u=1$ in the index form \eqref{eq:index2} and using Lemma~\ref{lem:stable} (i), we would deduce
\[
0\leq\mathcal{Q}_f(u,u)=-\int_{\Sg}\big(\text{Ric}_f(N,N)+|\sg|^2\big)\,da_f-\int_{\ptl\Sg}\text{II}(N,N)\,dl_f.
\]
Hence the convexity of $M$ and the nonnegativity of $\text{Ric}_f$ would give us $|\sg|^2=0$ along $\Sg$. In particular, any connected component of $\Sg$ would be contained inside a hyperplane. Let $\Sg'$ be a component of $\Sg$. We take a point $p_0\in\Sg'$ at maximum distance from the vertex of the cone. The point $p_0$ can be at the interior or at the boundary of $\Sg'$. Anyway, the fact that $\Sg'$ meets $\ptl M$ orthogonally in the points of $\ptl\Sg'$ would imply that $p_0$ is a normal vector to $\Sg'$. As a consequence, $\Sg'$ would be contained inside the tangent hyperplane $P$ at $p_0$ to the round sphere $S$ of radius $|p_0|$ centered at $0$. This contradicts the definition of $p_0$, since $P$ is a support hyperplane for the closed ball bounded by $S$.
\end{proof}

Note that the hypothesis $\text{Ric}_f\geq 0$ implies that the density has degree $k\geq 0$ by the second equality in Lemma~\ref{lem:gradhess}. In particular, we can reproduce the approximation argument in the proof of Theorem~\ref{th:main2} to obtain the next result, which is valid for strongly $f$-stable hypersurfaces containing the vertex of the cone.

\begin{proposition}
\label{prop:nonexist2}
Let $M\subeq\rrn$ be a convex solid cone endowed with a $k$-homogeneous density $f$ such that $n\geq 2$ or $n+k>2$. If $\emph{Ric}_f\geq 0$ then there are no smooth, compact, orientable, strongly $f$-stable hypersurfaces embedded in $M$ with $\ptl\Sg\sub\ptl M$.
\end{proposition}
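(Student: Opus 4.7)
The plan is to upgrade Proposition~\ref{prop:nonexist} to allow $0\in\Sigma$ by replacing the constant test function $u=1$, which is not admissible when the vertex lies on $\Sigma$, with the cutoff sequence $\{\var_\eps\}_{\eps>0}$ supplied by Lemma~\ref{lem:aprox}. Observe first that the hypothesis $\text{Ric}_f\geq 0$ forces $k\geq 0$ (by the second equality of Lemma~\ref{lem:gradhess}, $(\nabla^2\psi)_p(p,p)=-k$, together with $\text{Ric}_f\geq 0$ at the origin-direction), so the assumption $n\geq 2$ or $n+k>2$ indeed guarantees that the sequence $\var_\eps$ exists. If $0\notin\Sigma$ there is nothing new: Proposition~\ref{prop:nonexist} applies (the embedded case is a special instance of the immersed one). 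So I focus on the case $0\in\Sigma$.

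Plugging $\var_\eps\in C^\infty_0(\Sigma-\{0\})$ into the index form in the guise \eqref{eq:index1} via \eqref{eq:i1i2}, the strong $f$-stability of $\Sigma$ (Lemma~\ref{lem:stable}(i)) gives
\begin{equation*}
\int_\Sigma |\nabla_\Sigma\var_\eps|^2\,da_f\geq \int_\Sigma\big(\text{Ric}_f(N,N)+|\sg|^2\big)\,\var_\eps^2\,da_f+\int_{\ptl\Sigma}\text{II}(N,N)\,\var_\eps^2\,dl_f.
\end{equation*}
As $\eps\to 0$, the left-hand side tends to $0$ by Lemma~\ref{lem:aprox}(iii). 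On the right-hand side, convexity of $M$ yields $\text{II}(N,N)\geq 0$ on $\ptl M-\{0\}$, while the hypothesis $\text{Ric}_f\geq 0$ together with $|\sg|^2\geq 0$ makes both integrands nonnegative. By Lemma~\ref{lem:aprox}(i)-(ii) and Fatou's lemma,
\begin{equation*}
0\geq \int_\Sigma\big(\text{Ric}_f(N,N)+|\sg|^2\big)\,da_f+\int_{\ptl\Sigma}\text{II}(N,N)\,dl_f\geq 0,
\end{equation*}
which forces $|\sg|^2\equiv 0$ along $\Sigma-\{0\}$, and so on all of $\Sigma$ by continuity (here there is no a~priori integrability needed: Fatou forces finiteness and vanishing simultaneously).

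Consequently each connected component $\Sigma'$ of $\Sigma$ is totally geodesic in $\rrn$, i.e., contained in an affine hyperplane $P$. Pick $p_0\in\Sigma'$ at maximum Euclidean distance from the vertex. Since $\Sigma'$ is not a single point, $p_0\neq 0$, and the orthogonality condition along $\ptl\Sigma'$ (Lemma~\ref{lem:stationary}(ii)) together with the maximum-distance condition shows, as in the closing paragraph of the proof of Proposition~\ref{prop:nonexist}, that $p_0$ is normal to $\Sigma'$ at $p_0$; hence $P$ is the tangent hyperplane to the Euclidean sphere of radius $|p_0|$ centered at $0$ passing through $p_0$, and in particular $0\notin P$. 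But $0\in\Sigma'\subseteq P$ (in the only case that requires this new argument), a contradiction. The main obstacle is the passage to the limit in the index form when $\Sigma$ touches the singularity of both cone and density; this is precisely what the cutoff $\var_\eps$ coupled with the sign-definiteness of every term in \eqref{eq:index1} resolves, bypassing the subtle integrability step (Step~3) that was needed in the proof of Theorem~\ref{th:main2}.
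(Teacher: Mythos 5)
Your proof is correct and follows essentially the route the paper intends: the paper establishes this proposition only by remarking that $\text{Ric}_f\geq 0$ forces $k\geq 0$ and that the approximation argument of Theorem~\ref{th:main2} can be reproduced, and your argument is exactly that scheme carried out for the test function $u=1$, followed by the same support-hyperplane contradiction as in Proposition~\ref{prop:nonexist}. Your observation that the sign-definiteness of every term in $\mathcal{I}_f(\var_\eps,\var_\eps)$ lets Fatou's lemma deliver integrability and vanishing simultaneously (so the heavier Steps 1, 3 and 4 of the proof of Theorem~\ref{th:main2} are not needed here) is accurate.
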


Now we focus on homogeneous densities with degree $k<0$.
The next corollary follows from Theorem~\ref{th:main} and Example~\ref{ex:sphere2} by taking into account that a strongly $f$-stable hypersurface is also $f$-stable.

\begin{corollary}
\label{cor:strongly}
Let $M\subeq\rrn$ be a convex solid cone endowed with a homogeneous density $f=e^\psi$ of degree $k\neq -(n+1)$. Suppose also that $k<-n$ and $\emph{Ric}_f^k\geq 0$. Let $\Sg$ be a smooth, compact, orientable hypersurface immersed in $M^*$ with $\ptl\Sg\sub\ptl M$. Then, $\Sg$ is strongly $f$-stable if and only if $\Sg$ is the intersection with $M$ of a round sphere centered at the vertex. 
\end{corollary}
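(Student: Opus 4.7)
The plan is to assemble the two directions of the equivalence from results already proved in the paper, so I do not anticipate any substantive obstacle, only careful bookkeeping of hypotheses.

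For the necessity direction, I would first observe that every strongly $f$-stable hypersurface is automatically $f$-stable: the second variation inequality $(A_f-H_f V_f)''(0)\geq 0$ is assumed for \emph{every} variation, and along any volume-preserving variation it reduces to $A_f''(0)\geq 0$. The hypotheses in the statement, namely $M$ convex, $k\neq -(n+1)$, $k<-n$, and $\text{Ric}_f^k\geq 0$, are precisely those of Theorem~\ref{th:main}. Applying that theorem to the $f$-stable hypersurface $\Sg\sub M^*$ immediately yields that $\Sg$ is the intersection of $M$ with a round sphere centered at the vertex.

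For the sufficiency direction, suppose $\Sg=M\cap S_r$, where $S_r$ is a round sphere of radius $r$ centered at the vertex, and take the unit normal $N(p)=-p/r$. Example~\ref{ex:sphere1} shows that $\Sg$ is $f$-stationary, with constant $f$-mean curvature $H_f=(n+k)/r$, and that it meets $\ptl M$ orthogonally along $\ptl\Sg$, so the notion of strong $f$-stability is well-defined for $\Sg$. The computation carried out in Example~\ref{ex:sphere2} gives the explicit form of the index form,
\[
\mathcal{Q}_f(u,u)=\int_\Sg\left\{|\nabla_\Sg u|^2-\frac{n+k}{r^2}\,u^2\right\}da_f,
\]
valid for every $u\in C^\infty_0(\Sg-\{0\})$. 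Since $k<-n$ the coefficient $-(n+k)/r^2$ is strictly positive, so the integrand is pointwise nonnegative and $\mathcal{Q}_f(u,u)\geq 0$ without any mean-zero constraint on $u$. Because $\Sg\sub M^*$ means $0\notin\Sg$, the reverse implication in Lemma~\ref{lem:stable}(i) applies and delivers the strong $f$-stability of $\Sg$.

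The only point requiring a little care is that $k<-n$ forces $k\neq -n$, which is compatible with Remark~\ref{re:minkowski}, and together with the assumption $k\neq -(n+1)$ ensures that Theorem~\ref{th:main} is applicable as stated. No new estimate or test function is needed beyond those already assembled for the main theorem and the spherical-cap example.
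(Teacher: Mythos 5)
Your proposal is correct and follows essentially the same route as the paper: the necessity direction is Theorem~\ref{th:main} applied after noting that strong $f$-stability implies $f$-stability, and the sufficiency direction is exactly the computation of Example~\ref{ex:sphere2} combined with the reverse implication in Lemma~\ref{lem:stable}~(i) for $0\notin\Sg$. The paper condenses all of this into one sentence, so your write-up is just a more explicit version of the intended argument.
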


It was pointed in Remark~\ref{re:minkowski} that any compact $f$-stationary hypersurface $\Sg$ immersed in a punctured solid cone $M^*$ with a homogeneous density $f$ of degree $k=-n$ satisfies $H_f=0$. In particular, $\Sg$ is strongly $f$-stationary by Lemma~\ref{lem:stationary} and the notion of strong $f$-stability is the density analogue to the classical stability for minimal hypersurfaces with free boundary in a domain of $\rrn$. On the other hand, we saw in Example~\ref{ex:sphere2} that the intersection with $M$ of any round sphere centered at the vertex is strongly $f$-stable for $k=-n$. In the next result we generalize Corollary~\ref{cor:strongly} by showing uniqueness of these spherical caps as strongly $f$-stable hypersurfaces.

\begin{theorem}
\label{th:minimal}
Let $M\subeq\rrn$, $n\geq 2$, be a convex solid cone endowed with a homogeneous density $f=e^\psi$ of degree $k=-n$ such that $\emph{Ric}_f^{-n}\geq 0$. Let $\Sg$ be a smooth, compact, connected, orientable hypersurface immersed in $M^*$ with $\ptl\Sg\sub\ptl M$. Then, $\Sg$ is strongly $f$-stable if and only $\Sg$ is the intersection with $M$ of a round sphere centered at the vertex.
\end{theorem}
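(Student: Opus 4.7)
The plan is to test strong $f$-stability against the constant function $u=1$, which is admissible since $0\notin\Sg$. By Lemma~\ref{lem:stationary}, strong $f$-stability forces $H_f=0$ on $\Sg$ and orthogonality of $\Sg$ with $\ptl M$ along $\ptl\Sg$. From \eqref{eq:index2}, the stability inequality $\mathcal{Q}_f(1,1)\geq 0$ reads
\[
0 \leq -\int_\Sg \big(\text{Ric}_f(N,N)+|\sg|^2\big)\,da_f - \int_{\ptl\Sg}\text{II}(N,N)\,dl_f.
\]
On the other hand, $H_f=0$ gives $nH=\escpr{\nabla\psi,N}$, so $|\sg|^2\geq nH^2=\escpr{\nabla\psi,N}^2/n$, while $\text{Ric}_f^{-n}\geq 0$ amounts to $\text{Ric}_f(N,N)\geq -\escpr{\nabla\psi,N}^2/n$; adding these gives $\text{Ric}_f(N,N)+|\sg|^2\geq 0$ pointwise. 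Combined with $\text{II}\geq 0$ from convexity of $M$, both integrals vanish and their integrands vanish identically. In particular $|\sg|^2=nH^2$, so $\Sg$ is totally umbilical, $\text{Ric}_f^{-n}(N,N)\equiv 0$ on $\Sg$, and $\text{II}(N,N)\equiv 0$ along $\ptl\Sg$.

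Since $n\geq 2$ and $\Sg$ is connected, the classical umbilical classification gives that $\Sg$ lies either in a round sphere or in an affine hyperplane $P$. I would rule out the hyperplane alternative by a direct maximum argument. Writing $P=\{q:\escpr{q,N}=c\}$ and $w(p):=p-cN$, we have $|p|^2=c^2+|w|^2$ on $\Sg$, and $|w|^2$ is strictly convex on $P$ with $\nabla_P|w|^2=2w$, so it has no interior maximum on the $n$-dimensional $\Sg$. Since $\ptl\Sg\neq\emptyset$ (no compact flat hypersurface of $\rrn$ is closed), the maximum is attained at some $q_0\in\ptl\Sg$. The Kuhn--Tucker condition yields $w(q_0)=\lambda\,\nu(q_0)$ with $\lambda\leq 0$; but orthogonality identifies $\nu$ with the inner unit normal $\eta$ to $\ptl M$, so $\escpr{q_0,\nu(q_0)}=0$ (as $q_0$ lies along a ruling of $\ptl M$) and $\escpr{N,\nu(q_0)}=0$ (as $N\in T_{q_0}\ptl M$). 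Therefore $\escpr{w(q_0),\nu(q_0)}=0$, forcing $\lambda=0$ and $w(q_0)=0$; since this is the maximum of $|w|^2\geq 0$, one would get $\Sg=\{cN\}$, contradicting $\dim\Sg=n\geq 2$.

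For the remaining spherical case I would reproduce the closing paragraph of the proof of Lemma~\ref{lem:umbilical}: for $\Sg$ contained in a round sphere of radius $r$ and center $p_0$, the identity $\escpr{\nabla\psi,N}=nH=-kH$ combined with Lemma~\ref{lem:prop}~(ii) yields $\escpr{(\nabla\psi)(p),p_0}=0$ for every $p\in\Sg$; examining a point of maximum distance from the vertex and invoking the homogeneity relations of Lemma~\ref{lem:prop} forces $p_0=0$. Hence $\Sg$ is the intersection of $M$ with a round sphere centered at the vertex, and the converse implication is contained in Example~\ref{ex:sphere2}. The main obstacle is the exclusion of the hyperplane alternative, which did not arise in Theorem~\ref{th:main}: there the Minkowski formula \eqref{eq:mink2} enforces $H\neq 0$ automatically, whereas at the borderline $k=-n$ this formula degenerates, and the direct maximum-principle argument on $|p-cN|^2$ within $P$ seems to be the cleanest replacement.
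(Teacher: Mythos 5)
Your proposal is correct and follows essentially the same route as the paper: insert $u=1$ into $\mathcal{Q}_f$, combine $H_f=0$ with $\text{Ric}_f^{-n}\geq 0$ and $|\sg|^2\geq nH^2$ to force the integrands to vanish and hence umbilicity, exclude the hyperplane alternative by maximizing the distance to the vertex and exploiting the orthogonality condition together with the tangency of the position field to $\ptl M$ (your $|w|^2$ computation is a repackaging of the argument of Proposition~\ref{prop:nonexist}), and finish the spherical case with the endgame of Lemma~\ref{lem:umbilical}. One small correction: $H_f=0$ does not follow from Lemma~\ref{lem:stationary} alone, since strong $f$-stability only presupposes $f$-stationarity rather than strong $f$-stationarity; it is the Minkowski identity \eqref{eq:mink2} specialized to $k=-n$ (see Remark~\ref{re:minkowski}) that yields $H_f=0$, exactly as in the paper's proof.
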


\begin{proof}
Let $N$ be a unit normal vector along $\Sg$. We denote by $H_f$ and $H$ the $f$-mean curvature and the Euclidean mean curvature of $\Sg$, respectively. As $\Sg$ is $f$-stationary, we deduce from Lemma~\ref{lem:stationary} that $H_f$ is constant along $\Sg$, and that $\Sg$ meets orthogonally $\ptl M$ in the points of $\ptl\Sg$. By \eqref{eq:mink2} we get $H_f=0$ and so, $nH=\escpr{\nabla\psi,N}$ along $\Sg$. Since $\Sg$ is strongly $f$-stable we can apply Lemma~\ref{lem:stable} (i) to obtain $\mathcal{Q}_f(u,u)\geq 0$ for any $u\in C^\infty(\Sg)$, where $\mathcal{Q}_f$ is the $f$-index form defined in \eqref{eq:index2}. In particular, by taking $u=1$, we have
\[
0\leq-\int_\Sg\big(\text{Ric}_f(N,N)+|\sg|^2\big)\,da_f
-\int_{\ptl\Sg}\text{II}(N,N)\,dl_f.
\]
On the other hand, the hypothesis $\text{Ric}_f^{-n}\geq 0$ together with the inequality $|\sg|^2\geq nH^2$ and the fact that $nH=\escpr{\nabla\psi,N}$, gives us
\[
\text{Ric}_f(N,N)+|\sg|^2\geq\frac{-\escpr{\nabla\psi,N}^2}{n}+nH^2=0.
\]
This inequality and the convexity of the cone imply that
\[
|\sg|^2=nH^2,\quad\text{Ric}_f(N,N)=\frac{-\escpr{\nabla\psi,N}^2}{n},\quad\text{II}(N,N)=0,
\]
so that $\Sg$ is contained inside a hyperplane or a round sphere. By reasoning as in the proof of Proposition~\ref{prop:nonexist} we deduce that the first case is not possible.
In the second case we can reproduce the argument at the end of the proof of Lemma~\ref{lem:umbilical} to show that $\Sg$ is the intersection of $M$ with a round sphere centered at the vertex. 
\end{proof}

\begin{remarks}
1.  The previous result does not follow from Theorem~\ref{th:main}, where it was assumed that $k<-n$ or $k>0$. So, for the case $k=-n$, we cannot deduce that any $f$-stable hypersurface is the intersection with $M$ of a round sphere centered at $0$.

2. In the previous theorem we have to assume that $\Sg$ is connected to get a single sphere in the thesis. In fact, in the case $k=-n$, the union of finitely many spherical caps centered at $0$ turns out to be strongly $f$-stable by the computations in Example~\ref{ex:sphere2}. 
\end{remarks}

\begin{remark}[A note on the regularity hypotheses]
Theorems~\ref{th:main} and \ref{th:main2} hold for $C^3$ hypersurfaces in $C^2$ convex cones with $C^2$ densities. Theorem~\ref{th:minimal} is also valid for $C^2$ hypersurfaces. 
\end{remark}

\providecommand{\bysame}{\leavevmode\hbox to3em{\hrulefill}\thinspace}
\providecommand{\MR}{\relax\ifhmode\unskip\space\fi MR }
\providecommand{\MRhref}[2]{%
  \href{http://www.ams.org/mathscinet-getitem?mr=#1}{#2}
}
\providecommand{\href}[2]{#2}

\end{document}